\newtheorem{theorem}{Theorem}[section]
\newtheorem{lemma}[theorem]{Lemma}
\newtheorem{proposition}[theorem]{Proposition}
\theoremstyle{definition}
\newtheorem{definition}[theorem]{\sc Definition}
\newtheorem{example}[theorem]{\bf Example}
\newtheorem{remark}[theorem]{\bf Remark}
\newcommand{\Id}{{\rm Id}}
\newcommand{\tr}{{\rm Tr}}
\begin{document}

\title[Kohn decomposition]{Kohn decomposition for forms on coverings of complex manifolds constrained along fibres}
%{Kohn decomposition for forms satisfying Banach space-type restrictions along fibres of a covering}

\subjclass[2010]{32A38, 32K99}

\keywords{Kohn decomposition, holomorphic Banach vector bundle, harmonic form}

\author{A.~Brudnyi}

\address{Department of Mathematics and Statistics, University of Calgary, 
Calgary, Canada}

\email{abrudnyi@ucalgary.ca}

\author{D.~Kinzebulatov}

\address{The Fields Institute, Toronto, Canada}

\email{dkinzebu@fields.utoronto.ca}

\thanks{Research of the authors is partially supported  by NSERC}

\begin{abstract}
The classical result of J.J.~Kohn asserts that over a relatively compact subdomain $D$
with $C^\infty$ boundary of a Hermitian manifold whose Levi
form has at least $n-q$ positive eigenvalues or at least $q+1$ negative
eigenvalues at each boundary point, there are natural isomorphisms between 
the $(p,q)$ Dolbeault cohomology groups defined by means of $C^\infty$ up to the boundary differential forms on $D$
and the (finite-dimensional) spaces of harmonic $(p,q)$-forms on $D$ determined by the corresponding complex Laplace operator.
In the present paper, using Kohn's technique, we give a similar description of the $(p,q)$ Dolbeault cohomology groups of spaces of
differential forms taking values in certain (possibly infinite-dimensional) holomorphic Banach vector bundles on $D$. We apply this result
to compute the $(p,q)$ Dolbeault cohomology groups of some regular coverings of $D$ defined by means of $C^\infty$ forms constrained along fibres of the coverings.
\end{abstract}

\maketitle

\section{Introduction}
\label{intro}

Let $X$ be a connected Hermitian manifold of complex dimension $n$. A relatively compact subdomain $D=\{x\in X\, :\, \rho(x)<0\}\Subset X$, $\rho\in C^\infty(X)$, with $C^\infty$ boundary $\partial D$
is said to have {\em $Z(q)$-property}, if the Levi form of $\rho$
has at least $n-q$ positive eigenvalues or at least $q+1$ negative eigenvalues at each boundary point of $D$ (e.g., a strongly pseudoconvex subdomain of $X$ has $Z(q)$-property for all $q>0$). 

Let $\Lambda^{p,q}(\bar{D})$ be the space of $C^\infty$ ($p,q$)-forms on $D$ that admit $C^\infty$ extension in some open neighbourhood of the closure $\bar{D}$ of $D$ in $X$. Using the Hermitian metric on $X$, in a standard way one defines the Laplace operator $\Box$ on
$\Lambda^{p,q}(\bar{D})$, see, e.g., \cite{K} for details. The forms in ${\rm Ker}\,\Box=:\mathcal H^{p,q}(\bar{D})$ are called {\em harmonic}.

%Spaces $\Lambda^{p,q}(\bar{D})$ form a complex under the action of operator $\bar{\partial}:\Lambda^{p,q}(\bar{D}) \rightarrow \Lambda^{p,q+1}(\bar{D})$.

The following result is the major consequence of the theory developed by J.J.~Kohn, see \cite{KN}, \cite{K} or \cite{FK}.

\begin{theorem}%[\cite{FK}]
\label{thm0}
Suppose $D$ has $Z(q)$-property. Then ${\rm dim}_{\mathbb C}\mathcal H^{p,q}(\bar{D})<\infty$ and each $\bar{\partial}$-closed form
$\omega\in \Lambda^{p,q}(\bar{D})$ is uniquely presented as
\begin{equation}
\label{kohnd}
\omega=\bar{\partial}\xi+\chi,\quad\text{where}\quad \xi \in \Lambda^{p,q-1}(\bar{D}),\ \chi\in\mathcal H^{p,q}(\bar{D}).
\end{equation}
\end{theorem}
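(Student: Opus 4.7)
The plan is to follow the standard Hodge-theoretic strategy for the $\bar{\partial}$-Neumann problem, adapted to the setting where $D$ satisfies the $Z(q)$-property. First I would pass to the $L^2$ framework: using the Hermitian metric on $X$, equip $\Lambda^{p,q}(\bar{D})$ with the $L^2$ inner product and consider the maximal closed extension $\bar{\partial}\colon L^2_{p,q}(D)\to L^2_{p,q+1}(D)$ together with its Hilbert-space adjoint $\bar{\partial}^*$. Everything will rest on the quadratic form
\[
Q(\phi,\psi) = (\bar{\partial}\phi,\bar{\partial}\psi) + (\bar{\partial}^*\phi,\bar{\partial}^*\psi)
\]
defined on $\Dom(\bar{\partial})\cap \Dom(\bar{\partial}^*)$, which encodes the $\bar{\partial}$-Neumann boundary condition on $\partial D$.

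The heart of the argument is the \emph{basic estimate} (the Morrey--Kohn--H\"ormander inequality): under the $Z(q)$-property there is a constant $C>0$ such that
\[
\|\phi\|_{1/2}^2 \leq C\bigl(Q(\phi,\phi)+\|\phi\|_0^2\bigr)
\]
for every smooth $(p,q)$-form $\phi$ in the $\bar{\partial}$-Neumann domain. This subelliptic $\tfrac12$-estimate is obtained by integration by parts against $\rho$; the $Z(q)$-hypothesis forces the boundary term, which involves the Levi form acting on $(p,q)$-forms, to have the correct sign after the $n-q$ favourable eigenvalues (or the $q+1$ unfavourable ones) are exploited. This is the step I expect to require the most care, since it is the sole place where the geometric hypothesis enters and it dictates the degree $q$ at which the conclusion is valid.

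From the basic estimate the functional-analytic consequences follow in the standard way. The form $Q$ is closed and densely defined, its associated self-adjoint operator is $\Box$, its kernel $\mathcal{H}^{p,q}(\bar{D})$ is finite-dimensional (via compactness of the embedding $H^{1/2}\hookrightarrow L^2$), and $\Box$ has closed range equal to $\mathcal{H}^{p,q}(\bar{D})^{\perp}$. Inverting $\Box$ on that complement yields the Neumann operator $N\colon L^2_{p,q}(D)\to L^2_{p,q}(D)$ and the $L^2$-Hodge decomposition
\[
\phi = H\phi + \bar{\partial}\,\bar{\partial}^* N\phi + \bar{\partial}^*\bar{\partial}N\phi,
\]
where $H$ denotes the orthogonal projection onto $\mathcal{H}^{p,q}(\bar{D})$.

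The last step is to upgrade this $L^2$ identity to forms smooth up to $\partial D$. Interior ellipticity of $\Box$ handles smoothness inside $D$; smoothness up to the boundary is bootstrapped from the subelliptic estimate by the Kohn--Nirenberg technique of tangential difference quotients, giving the continuity of $N\colon \Lambda^{p,q}(\bar{D})\to \Lambda^{p,q}(\bar{D})$. Given a $\bar{\partial}$-closed $\omega\in \Lambda^{p,q}(\bar{D})$, I would then set $\chi:=H\omega\in\mathcal{H}^{p,q}(\bar{D})$ and $\xi:=\bar{\partial}^*N\omega\in\Lambda^{p,q-1}(\bar{D})$; the assumption $\bar{\partial}\omega=0$ forces $\bar{\partial}^*\bar{\partial}N\omega=0$ in the Hodge decomposition above, so that $\omega=\chi+\bar{\partial}\xi$, and the uniqueness in \eqref{kohnd} is immediate from the orthogonality of $\mathcal{H}^{p,q}(\bar{D})$ to $\bar{\partial}(\Lambda^{p,q-1}(\bar{D}))$.
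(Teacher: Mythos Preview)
The paper does not give its own proof of this theorem; it is quoted as the classical result of Kohn, with pointers to \cite{KN}, \cite{K}, \cite{FK}. Your outline is the standard route taken in those references (particularly the Folland--Kohn exposition): the basic subelliptic $\tfrac12$-estimate under the $Z(q)$ hypothesis, compactness of $H^{1/2}\hookrightarrow L^2$ giving finite-dimensionality of $\mathcal H^{p,q}(\bar D)$ and closed range of $\Box$, construction of the Neumann operator $N$, and Kohn--Nirenberg boundary regularity to push the $L^2$ Hodge decomposition down to $\Lambda^{p,q}(\bar D)$. One small wording point: the uniqueness in \eqref{kohnd} refers to the pair $(\bar\partial\xi,\chi)$, not to $\xi$ itself, which is only determined modulo $\ker\bar\partial$; your orthogonality argument gives exactly that.
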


It follows that the map
$$
\mathcal H^{p,q}(\bar{D}) \ni  \omega \mapsto [\omega] \in H^{p,q}(\bar{D}):=\{\omega \in \Lambda^{p,q}(\bar{D}): \bar{\partial} \omega=0\}/\bar{\partial} \Lambda^{p,q-1}(\bar{D}),
$$
where $[\omega]$ stands for the cohomology class of $\omega$,
is an isomorphism.

As a corollary, one obtains the characterization of the \textit{Dirichlet cohomology groups}
$$
H_0^{r,s}(\bar{D}):=\mathcal Z_0^{r,s}(\bar{D})/\mathcal B_0^{r,s}(\bar{D}),
$$
where
$$
\mathcal Z_0^{r,s}(\bar{D}):=\{\omega \in \Lambda_0^{r,s}(\bar{D}): \bar{\partial}\omega=0 \}, \quad \mathcal B_0^{r,s}(\bar{D}):=\bar{\partial}\{\omega \in \Lambda^{r,s-1}_0(\bar{D}): \bar{\partial}\omega \in \Lambda_0^{r,s}(\bar{D})\}
$$
and
$$
\Lambda_0^{r,s}(\bar{D}):=\{\omega \in \Lambda^{r,s}(\bar{D}): \omega|_{\partial D}=0\}.
$$
%Namely, one has the following result (cf.~\cite{FK} -- from now on we will be referring to this monograph, which contains the results of \cite{KN}):
Namely, one has the following result:

\begin{theorem}[\cite{FK}]
\label{thm0_2}
If $D$ has $Z(q)$-property, then there is a natural isomorphism 
$$H_0^{n-p,n-q}(\bar{D}) \cong (H^{p,q}(\bar{D}))^{\ast}$$
%(here $V^\ast$ denotes the dual space of $V$) 
induced by the map associating to each $\xi \in \mathcal Z_0^{n-p,n-q}(\bar{D})$ the linear functional
$$
\mathcal Z^{p,q}(\bar{D}) \ni \theta \mapsto \int_D \theta \wedge \xi.
$$
\end{theorem}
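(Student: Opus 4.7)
The plan is to verify the pairing descends to cohomology, identify $H^{p,q}(\bar D)$ with the finite-dimensional space $\mathcal H^{p,q}(\bar D)$ via Theorem~\ref{thm0}, and reduce non-degeneracy to positive-definiteness of the Hodge $L^2$-inner product, using the Hodge star operator as the bridge between the $\bar\partial$-Neumann and Dirichlet boundary conditions.

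\textbf{Well-definedness.} First I would check that $(\theta,\xi) \mapsto \int_D \theta \wedge \xi$ descends to $H^{p,q}(\bar D) \times H_0^{n-p,n-q}(\bar D)$. Since $\eta\wedge\xi$ has bidegree $(n,n-1)$ so $\partial(\eta\wedge\xi)=0$, and since $\bar\partial\xi = 0$, we have $d(\eta\wedge\xi) = \bar\partial\eta\wedge\xi$, whence Stokes gives
\[
\int_D \bar\partial\eta\wedge\xi \;=\; \int_{\partial D} \eta\wedge\xi \;=\; 0
\]
because $\xi|_{\partial D}=0$. A symmetric computation handles modifications of $\xi$ by an element of $\mathcal B_0^{n-p,n-q}(\bar D)$ (the primitive vanishes on $\partial D$ by definition), so we obtain a well-defined map $T: H_0^{n-p,n-q}(\bar D) \to (H^{p,q}(\bar D))^*$.

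\textbf{Inverse of $T$.} By Theorem~\ref{thm0}, $H^{p,q}(\bar D) \cong \mathcal H^{p,q}(\bar D)$ has finite dimension $d$. Let $\bar\ast : \Lambda^{p,q}(\bar D) \to \Lambda^{n-p,n-q}(\bar D)$ denote the conjugate Hodge star. For $\chi \in \mathcal H^{p,q}(\bar D)$ the relations $\bar\partial\chi = 0$, $\bar\partial^*\chi = 0$, and the $\bar\partial$-Neumann boundary condition $\sigma(\bar\partial^*, d\rho)\chi|_{\partial D} = 0$ translate, via a local symbol computation at $\partial D$, into $\bar\partial(\bar\ast\chi) = 0$ and $(\bar\ast\chi)|_{\partial D} = 0$, so $\bar\ast\chi \in \mathcal Z_0^{n-p,n-q}(\bar D)$. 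For a basis $\chi_1,\dots,\chi_d$ of $\mathcal H^{p,q}(\bar D)$ the matrix $\bigl(\int_D \chi_i\wedge\bar\ast\chi_j\bigr)_{i,j}$ is, up to a fixed sign, the Gram matrix of the Hodge $L^2$-inner product on $\mathcal H^{p,q}(\bar D)$, hence non-singular. This immediately yields surjectivity of $T$. For injectivity, if $T[\xi]=0$ then $\bar\ast\xi$ is $L^2$-orthogonal to $\mathcal H^{p,q}(\bar D)$; applying the Kohn decomposition of Theorem~\ref{thm0} to $\bar\ast\xi$ and transporting the resulting primitive back through $\bar\ast$ produces $\zeta \in \Lambda_0^{n-p,n-q-1}(\bar D)$ with $\bar\partial\zeta = \xi$ (so $\bar\partial\zeta \in \Lambda_0^{n-p,n-q}(\bar D)$ automatically), whence $[\xi]=0$ in $H_0^{n-p,n-q}(\bar D)$.

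\textbf{Main obstacle.} The principal difficulty lies in the precise translation under $\bar\ast$ of the $\bar\partial$-Neumann boundary conditions of Kohn's framework, namely the vanishing of the ``normal'' component $\sigma(\bar\partial^*, d\rho)\omega|_{\partial D}$ together with the analogous condition on $\bar\partial\omega$, into the Dirichlet-type conditions $\omega|_{\partial D}=0$ and $\bar\partial\omega|_{\partial D}=0$ defining $\mathcal Z_0^{n-p,n-q}(\bar D)$ and $\mathcal B_0^{n-p,n-q}(\bar D)$. This amounts to a careful local calculation of principal symbols at $\partial D$, tracking conjugations, bidegrees, and the interaction of $\bar\ast$ with the defining function $\rho$; once it is settled, the rest of the argument is formal.
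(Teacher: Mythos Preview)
Your approach is precisely that of Folland--Kohn \cite[Ch.~V]{FK}, which the paper cites for this theorem rather than proving it independently; in Section~5 the paper quotes the FK operator $Q_{X\times\mathbb C}$ verbatim when establishing the Banach-bundle generalization. Your Stokes argument for well-definedness is correct and matches Lemma~\ref{stokes}.

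The gap is that the symbol computation does not yield what you expect. Since $\sigma(\bar\partial^*,d\rho)$ is interior multiplication by the metric dual of $\bar\partial\rho$, the Neumann condition on $\chi$ is equivalent under $\bar\ast$ only to $\bar\ast\chi$ lying in the ideal generated by $\bar\partial\rho$ and $\rho$ at boundary points---strictly weaker than $(\bar\ast\chi)|_{\partial D}=0$. Thus $\bar\ast\chi\notin\mathcal Z_0^{n-p,n-q}(\bar D)$ in general, and your Gram-matrix surjectivity step does not go through as written. For injectivity there is a parallel issue, together with the fact that $\bar\ast\xi$ need not be $\bar\partial$-closed (that would require $\bar\partial^*\xi=0$, which you do not have), so Theorem~\ref{thm0} as stated does not apply to it; one must invoke the full Neumann operator $N$, not just the decomposition for closed forms. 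The missing ingredient in both directions is an explicit $\rho$-modification: any form $\omega$ with $\bar\partial\rho\wedge\omega|_{\partial D}=0$ can be written $\omega=\bar\partial\rho\wedge\alpha+\rho\theta$, and then $\omega-\bar\partial(\rho\alpha)=\rho(\theta-\bar\partial\alpha)\in\Lambda_0$ while remaining $\bar\partial$-closed if $\omega$ was. This is exactly the content of the formula $Q_{X\times\mathbb C}\psi=\rho(-\bar\partial\alpha+\theta)$, where $\ast\overline{\bar\partial N(\ast\bar\psi)}=\bar\partial\rho\wedge\alpha+\rho\theta$, that the paper quotes from \cite{FK}; once you build this step into your outline, it becomes a correct proof.
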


Since Theorems \ref{thm0} and \ref{thm0_2} are independent of $p$, they can be viewed as assertions about spaces of $C^\infty$ $(0,q)$-forms on $\bar D$ with values in the (finite-dimensional) holomorphic vector bundle of $(p,0)$-forms on $X$. This manifests a more general fact: Kohn's arguments can be transferred without significant changes to spaces of $C^\infty$  $(p,q)$-forms on $\bar D$ taking values in a finite-dimensional Hermitian holomorphic vector bundle on $X$ (see, e.g.,~\cite[Ch.IV]{FK}).

The goal of the present paper is to extend Theorems \ref{thm0} and \ref{thm0_2} to spaces of $C^\infty$ $(p,q)$-forms on $\bar D$ with values in an \textit{infinite-dimensional} holomorphic Banach vector bundle $E$ on $X$. (Note that if $E$ is not Hilbertian, Kohn's arguments are not applicable.) %Our motivating example (Section \ref{motivation}) 
We apply these results to differential forms on (possibly unbounded!) subdomains $\bar{D}'=r^{-1}(\bar{D}) \subset X'$, where $r:X'\rightarrow X$ is a regular covering of a complex manifold $X$, 
%and $p:X' \rightarrow X$ is a holomorphic regular covering; these forms 
satisfying additional constraints along fibres of the covering (see Section~3).
Such forms appear within theories of algebras of bounded holomorphic functions on regular coverings of $X$. Another, sheaf-theoretic, approach to the study of such algebras was proposed in \cite{BrK}. It is based on analogues of Cartan theorems A and B for coherent-type sheaves on certain fibrewise complactifications of the covering (a topological space having some properties of a complex manifold).% which allowed us to extend the basic results of complex function theory to these forms. Below we elaborate an alternative approach to study of these forms.

\section{Main results}
\label{main}

%We extend Theorems \ref{thm0} and \ref{thm0_2} to the spaces $\Lambda^{p,q}(\bar{D},E)$ of $C^\infty$ forms on $\bar{D}$ taking values in a holomorphic Banach vector bundle 
Let $\pi:E \rightarrow X$ be a holomorphic Banach vector bundle with fibre $B$. %Let $GL(B)$ denote the group of invertible bounded linear operators on $B$. 
For an open $U\subset X$ by $\Lambda^{p,q}(U,E)$ we denote the space of $C^\infty$ $E$-valued $(p,q)$-forms on $U$, i.e., $C^\infty$ sections of the holomorphic Banach vector bundle $E \otimes\bigl( \wedge^p T^*X\bigr) \wedge\bigl(\wedge^q \overline{T^*X}\bigr)$ over $U$ (here $T^*X$ is the holomorphic cotangent bundle on $X$).
Also, we denote by $\mathcal O(X, E)$ the space of holomorphic sections of $E$ equipped with (Hausdorff) topology of uniform convergence on compact subsets of $X$ (defined in local trivializations on $E$ by the norm of $B$). For  a compact subset $S\subset X$ by $C(S,E)$ we denote the space of continuous sections of $E$ on $S$ equipped with topology of uniform convergence.
%$\mathcal O(\bar D, E)$ is the space of holomorphic sections of $E$ over $D$ continuous on $\bar D$ equipped with topology of uniform convergence on $\bar D$. 
(The former space admits the natural structure of a Fr\'{e}chet space and the latter one of a complex Banach space).

Let $\Lambda^{p,q}(\bar{D},E):=\Lambda^{p,q}(X,E)|_{\bar{D}}$ be the space of restrictions to $\bar{D}$ of $C^\infty$ $E$-valued forms on $X$. In a standard way, using local trivializations on $E$, we equip $\Lambda^{p,q}(\bar{D},E)$ with the Fr\'{e}chet topology determined by a sequence of $C^k$-like norms $\{\|\cdot\|_{k,\bar{D},E}\}_{k=0}^\infty$ (see subsection \ref{results}).
Then the standard operator 
$$\bar{\partial}:\Lambda^{p,q}(\bar D,E) \rightarrow \Lambda^{p,q+1}(\bar D,E)$$ 
is continuous.
Consider the corresponding subspaces of $\bar{\partial}$-closed and $\bar{\partial}$-exact forms
$$\mathcal Z^{p,q}(\bar{D},E):=\{\omega \in \Lambda^{p,q}(\bar{D},E): \bar{\partial} \omega=0\}\quad\text{and}\quad
\mathcal B^{p,q}(\bar{D},E):=\bar{\partial}\Lambda^{p,q-1}(\bar{D},E)$$
equipped with topology induced from $\Lambda^{p,q}(\bar{D},E)$.

Our results concern the structure of the cohomology group
$$
H^{p,q}(\bar{D},E):=\mathcal Z^{p,q}(\bar{D},E)/\mathcal B^{p,q}(\bar{D},E)
$$
and its dual, for bundles from the class $\Sigma_0(X)$ consisting of {\em direct summands of holomorphically trivial bundles}, that is, $E\in \Sigma_0(X)$ if
there exists a holomorphic Banach vector bundle $E'$ on $X$ such that the Whitney sum of bundles $E\oplus E'$ is holomorphically trivial.

\begin{example} 
\label{sigma_ex}
Each holomorphic Banach vector bundle on a Stein manifold $Y$ is in $\Sigma_0(Y)$ (see, e.g.,~\cite[Th.~3.9]{Obz}). Thus if $f: X\rightarrow Y$ is a holomorphic map, then $E:=f^*E'\in \Sigma_0(X)$ for every holomorphic Banach vector bundle $E'$ on $Y$. The class of such bundles $E$ will be denoted by $\Sigma_0^s(X)$.
\end{example}

In what follows, by $Z^m$ we denote the $m$-fold direct sum of a vector space $Z$, and we ignore all objects related to $m=0$.
\begin{theorem}
\label{thm2}
Suppose $E \in \Sigma_0(X)$ and $D\Subset X$ has $Z(q)$-property. Fix a basis $\{\chi_i\}_{i=1}^m\subset \mathcal H^{p,q}(\bar{D})$.

(1) There exist a closed complemented subspace
$\mathcal A \subset \mathcal O(X, E)^m$ and a finite subset $S\subset \bar D$ such that 
\begin{itemize}
\item[(a)]
$\mathcal A|_{S}$ is a closed %complemented 
subspace of the Banach space $C(S,E)^m$ and
the restriction to $S$ induces an isomorphism of the Fr\'{e}chet spaces
$\mathcal A\cong\mathcal A|_{S}$;
\item[(b)]
The linear map $L:\mathcal B^{p,q}(\bar D, E)\oplus \mathcal A\rightarrow \mathcal Z^{p,q}(\bar{D}, E)$,
\[
L\bigl(\eta, (f_1,\dots,f_m)\bigr):=\eta+\sum_{i=1}^m f_i|_{\bar{D}}\cdot\chi_i,\quad \eta\in \mathcal B^{p,q}(\bar D, E),\ (f_1,\dots,f_m)\in \mathcal A,
\]
is an isomorphism of Fr\'{e}chet spaces.
\end{itemize} 

(2) If the group $GL(B)$ of invertible bounded linear operators on the fibre $B$ of $E$
is contractible, and $E\in\Sigma_0^s(X)$, then the restriction map $r_x:\mathcal A\rightarrow \pi^{-1}(x)^m\cong B^m$, $(f_1,\dots, f_m)\mapsto (f_1(x),\dots, f_m(x))$, is a Banach space isomorphism for each $x\in X$.\medskip

\end{theorem}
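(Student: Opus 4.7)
My overall strategy is to reduce the Banach-valued problem to Kohn's scalar theorem by exploiting the Whitney summand decomposition $E\oplus E'\cong X\times F$ furnished by $E\in\Sigma_0(X)$.

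For Part~(1), I first handle the trivial bundle case $E = X\times F$. Theorem~\ref{thm0} provides a continuous harmonic projection $P:\mathcal Z^{p,q}(\bar D)\to \mathcal H^{p,q}(\bar D)$ with kernel $\mathcal B^{p,q}(\bar D)$; writing $P(\omega)=\sum_i a_i(\omega)\chi_i$ exhibits continuous linear functionals $a_i$, which I extend by Hahn-Banach to $\Lambda^{p,q}(\bar D)$. Tensoring each $a_i$ with $\mathrm{Id}_F$ in the Bochner sense yields continuous operators $c_i:\Lambda^{p,q}(\bar D,X\times F)\to F$ and a vector-valued Kohn decomposition $\omega = \bar\partial\xi + \sum c_i(\omega)\chi_i$ for $\omega\in\mathcal Z^{p,q}(\bar D,X\times F)$. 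I take $\mathcal A_F$ to be the $m$-tuples of constant $F$-valued sections, identified with $F^m$; then (a) holds with $S$ any single point $\{x_0\}\subset\bar D$ and (b) follows via the open mapping theorem. For general $E\in\Sigma_0(X)$, fix a decomposition $E\oplus E'\cong X\times F$ with fiberwise holomorphic projection $p_E$ onto $E$; any $v\in F$ yields the global holomorphic section $\hat v(x):=p_E(x)v$ of $E$, and I set $\mathcal A:=\{(\hat v_1,\dots,\hat v_m):v_i\in F\}\subset\mathcal O(X,E)^m$. Applying $p_E$ pointwise to the trivial decomposition, and using that $p_E$ commutes with $\bar\partial$, yields (b); for (a), I choose a finite $S\subset\bar D$ so that $\bigcap_{x\in S}\ker p_E(x)=\bigcap_{x\in X}\ker p_E(x)$, which ensures the restriction map $\mathcal A\to C(S,E)^m$ is injective with closed image.

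For Part~(2), the additional hypotheses force $E$ to be holomorphically trivial. By definition $E=f^*E_0$ for a holomorphic map $f:X\to Y$ with $Y$ Stein and $E_0$ a holomorphic Banach bundle on $Y$ with fiber $B$; by the Bungart-P\'atyi triviality theorem for holomorphic Banach bundles on Stein manifolds whose structure group is contractible, one has $E_0\cong Y\times B$, and hence $E\cong X\times B$. In this trivial setting the Part~(1) construction produces $\mathcal A$ equal to the constant $B$-valued sections, so $\mathcal A\cong B^m$ and $r_x$ is literally the canonical identification for every $x\in X$ — a Banach space isomorphism.

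The main obstacle, in my view, lies in Part~(1)(a) for the general non-trivial $\Sigma_0(X)$ case: producing a finite set $S$ so that the restriction $\mathcal A\to\mathcal A|_S\subset C(S,E)^m$ is bicontinuous with closed image. In infinite dimensions, finitely many point evaluations of holomorphic sections need not have closed image, and the joint kernel $\bigcap_{x\in S}\ker p_E(x)$ need not stabilize with finitely many points. I expect the argument to use the isomorphism in (b) to identify $\mathcal A$ with the cohomology $H^{p,q}(\bar D,E)$, endowing it with a Banach space structure, and to combine this with a finite-dimensional reduction exploiting the quotient presentation $\mathcal A\cong F^m/K^m$, where $K:=\bigcap_{x\in X}\ker p_E(x)$.
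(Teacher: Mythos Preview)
Your Part (2) and the broad tensor-with-identity strategy for the trivial bundle match the paper. There are, however, two genuine gaps.

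\textbf{In (1)(b) for general $E$, your $\mathcal A$ is too large and injectivity of $L$ is unproved.} Applying $p_E$ to the trivial-bundle decomposition gives surjectivity of $L$, but you have not shown $t(\mathcal A)\cap\mathcal B^{p,q}(\bar D,E)=\{0\}$, i.e.\ that $\sum_i\hat v_i\,\chi_i$ exact forces $\hat v_i=0$. This can fail: already in the scalar case there are many nonzero holomorphic $g$ with $g\chi_1$ exact (the single linear condition $\int_D g\,|\chi_1|^2\,dV=0$), so the nonconstant sections $\hat v_i=p_E(\cdot)v_i$ have no reason to avoid it. The paper's fix is to define $\mathcal A$ not as $\hat r(F^m)$ but as the image of $\mathcal Z^{p,q}(\bar D,E)$ under $P:=\hat r\circ c\circ H_{B_2}\circ\hat i^{p,q}$; one then checks $t\circ P=H_E:=\hat r^{p,q}\circ H_{B_2}\circ\hat i^{p,q}$ and $H_E^2=H_E$ on closed forms, so $H_E$ is a genuine projection with kernel $\mathcal B^{p,q}(\bar D,E)$ and range $t(\mathcal A)$, giving the direct-sum splitting and showing $\mathcal A$ is closed and complemented via the projection $P\circ t$ on $\mathcal O(X,E)^m$. (A smaller gap in the trivial case: you only tensor the harmonic projection; knowing every scalarization of $\omega-\sum c_i(\omega)\chi_i$ is exact does not by itself produce a Banach-valued primitive. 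The paper also tensors $G_{\mathbb C}=\bar\partial^*N$ with $\mathrm{Id}_B$ to get an explicit $G_B$ and verifies $\omega=\bar\partial G_B(\omega)+H_B(\omega)$ via scalarization.)

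\textbf{For (1)(a), the paper's argument is entirely different from your kernel-stabilization idea.} One first shows that restriction $R_{\bar D}:\mathcal A\to A(\bar D,E)^m$ is an isomorphism onto a closed subspace: since $H_{B_2}$ is bounded in the $\|\cdot\|_0$-norm, the composite $c\circ\bar H_{B_2}\circ\hat i^{p,q}\circ\tilde t$ extends to a continuous map $A(\bar D,E)^m\to B_2^m$ that recovers the $\mathcal A$-element from its $\bar D$-trace. Repeating this with a slightly larger $D'\Supset\bar D$ gives $\|v\|_{0,\bar D',E^m}\le C^{-1}\|v\|_{0,\bar D,E^m}$ on $\mathcal A$. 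Combined with the Cauchy estimate $\|\partial v\|_{1,\bar D,E^m}\le C'\|v\|_{0,\bar D',E^m}$ for holomorphic sections, this yields $\|v|_{S_\varepsilon}\|\ge\tfrac12\|v|_{\bar D}\|$ for any sufficiently fine $\varepsilon$-net $S_\varepsilon\subset\bar D$; taking $S=S_\varepsilon$ finishes (a). No stabilization of kernels of $p_E(x)$ is needed or used.
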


\begin{remark}\label{rem1}
(1)~It follows that $\mathcal B^{p,q}(\bar D, E)$ is a closed subspace of the  Fr\'{e}chet space $\mathcal Z^{p,q}(\bar{D}, E)$ and so the quotient space $H^{p,q}(\bar{D},E)$ is Fr\'{e}chet. It is trivial if $m=0$, for otherwise, it is isomorphic (in the category of  Fr\'{e}chet spaces) to the complex Banach space $\mathcal A|_{S}\subset C(S,E)^m \cong B^{rm}$; here $r$ is the cardinality of $S$.

\smallskip

(2)~If $X$ is a Stein manifold, then it admits a K\"{a}hler metric. Working with this metric, one obtains that the corresponding harmonic forms $\chi_i$ in Theorem \ref{thm2} are also $d$-closed (see, e.g.~\cite[Ch.0, Sect.7]{GH}).

\smallskip

(3)~The class of complex Banach spaces $B$ with contractible group $GL(B)$ include infinite-dimensional Hilbert spaces, spaces $\ell^p$ and $L^p[0,1]$, $1\le p\le \infty$, $c_0$ and $C[0,1]$, spaces $L_p(\Omega,\mu)$, $1<p<\infty$, of $p$-integrable measurable functions on an arbitrary
measure space $\Omega$, some classes of reflexive symmetric function spaces and spaces $C(G)$ for $G$ being infinite dimensional compact topological groups (see, e.g., \cite{M} for details).

\end{remark}

Next, we formulate an analogue of Theorem \ref{thm0_2}. 
We will need the following notation.
Let  $V\rightarrow X$  be a holomorphic Banach vector bundle.
Set 
$$
\Lambda_0^{r,t}(\bar{D},V):=\{\omega \in \Lambda^{r,t}(\bar{D},V): \omega|_{\partial D}=0\}
$$
and define the \textit{$V$-valued Dirichlet cohomology groups} of $\bar{D}$ by the formula
$$
H_0^{r,s}(\bar{D},V):=\mathcal Z_0^{r,s}(\bar{D},V)/\mathcal B_0^{r,s}(\bar{D},V),
$$
where 
\[
\begin{array}{l}
Z_0^{r,s}(\bar{D},V):=\{\omega \in \Lambda_0^{r,s}(\bar{D},V): \bar{\partial}\omega=0 \}\qquad\text{and} \medskip\\
B_0^{r,s}(\bar{D},V):=\bar{\partial}\{\omega \in \Lambda^{r,s-1}_0(\bar{D},V):  \bar{\partial}\omega \in \Lambda_0^{r,s}(\bar{D},V)\}.
\end{array}
\]
We endow spaces $B_0^{r,s}(\bar{D},V) \subset Z_0^{r,s}(\bar{D},V) \subset \Lambda_0^{r,s}(\bar{D},V)$ with the topology induced by that of $\Lambda^{r,s}(\bar{D},V)$. One can easily check that
$Z_0^{r,s}(\bar{D},V)$ and $\Lambda_0^{r,s}(\bar{D},V)$ are Fr\'{e}chet spaces with respect to this topology.

We retain notation of Theorem \ref{thm2}. In the following result $H^{p,q}(\bar{D},E)$, $E\in\Sigma_0(X)$, is equipped with the Fr\'{e}chet space structure given by Theorem \ref{thm2}. By $E^*$ we denote the bundle dual to $E$. Also, for $m>0$,
$\{\chi_i\}_{i=1}^m$ is a fixed basis of $\mathcal H^{p,q}(\bar{D})$ and $\mathcal A \subset \mathcal O(X, E)^m$ is the corresponding subspace of Theorem \ref{thm2}.

%{\bf Here if we are working with the basis of Theorem 2.1, then introduce  the dual basis explicitly using the wedge product of Theorem 1.2.}

\begin{theorem}
\label{thm6}
Suppose $E \in \Sigma_0(X)$ and $D\Subset X$ has $Z(q)$-property. Fix forms
$\{\gamma_i\}_{i=1}^m \subset \mathcal Z_0^{n-p,n-q}(\bar{D})$ such that $\int_D \chi_i \wedge \gamma_j=\delta_{ij}$ - the Kronecker delta. (Their existence follows from Theorem \ref{thm0_2}.)

\smallskip

(1) $\mathcal B_0^{n-p,n-q}(\bar{D},E^*)$ is a closed subspace of the  Fr\'{e}chet space $\mathcal Z_0^{n-p,n-q}(\bar{D},E^*)$; moreover, the  quotient (Fr\'{e}chet) space $H_0^{n-p,n-q}(\bar{D},E^*)$ is  naturally isomorphic to the dual space $\bigl(H^{p,q}(\bar{D},E)\bigr)^*$. 

\smallskip

(2) There exist a closed subspace $\mathcal B \subset \mathcal O(X, E^*)^m$ isomorphic to the dual of $\mathcal A$ and a finite subset $S^*\subset\bar{D}$ such that 
\begin{itemize}
\item[(a)] The restriction to $S^*$ induces an isomorphism of the Fr\'{e}chet spaces
$\mathcal B\cong\mathcal B|_{S^*}$;
\item[(b)]
The linear map $M:\mathcal B \rightarrow H_0^{n-p,n-q}(\bar{D},E^*)$
$$
M(h_1,\dots,h_m):=\left[\sum_{i=1}^m h_i|_{\bar D} \cdot \gamma_i\right], \quad (h_1,\dots,h_m) \in \mathcal B,
$$
is an isomorphism of Fr\'{e}chet spaces; here $[\eta]$ stands for the cohomology class of $\eta$.
\end{itemize}
\end{theorem}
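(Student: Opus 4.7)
The plan is to first establish Part (1) via a duality pairing, then construct $\mathcal B$ for Part (2) by applying Theorem \ref{thm2} a second time—now to the bundle $E^*$—and verify the isomorphism property of $M$ using the pairing from Part (1).

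For Part (1), I would define the continuous bilinear form
\[
\langle \omega,\xi\rangle:=\int_D \omega\wedge\xi,\qquad \omega\in\mathcal Z^{p,q}(\bar D,E),\ \xi\in\mathcal Z_0^{n-p,n-q}(\bar D,E^*),
\]
where the wedge combines exterior multiplication of forms with the fibrewise duality $E\otimes E^*\to\mathbb C$. Writing $\bar\partial(\alpha\wedge\xi)=\bar\partial\alpha\wedge\xi+(-1)^{p+q-1}\alpha\wedge\bar\partial\xi$ (and symmetrically for $\omega\wedge\beta$) and invoking Stokes' theorem together with the boundary-vanishing of $\xi$ (resp.\ $\beta$), one sees that the pairing descends to a continuous bilinear map on cohomology classes, and so induces a continuous linear map
\[
\Psi:H_0^{n-p,n-q}(\bar D,E^*)\longrightarrow \bigl(H^{p,q}(\bar D,E)\bigr)^*.
\]
By Theorem \ref{thm2}(1), the right-hand side is naturally the Banach space $\mathcal A^*$. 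The closedness of $\mathcal B_0^{n-p,n-q}(\bar D,E^*)$ in $\mathcal Z_0^{n-p,n-q}(\bar D,E^*)$ will be a byproduct of showing $\Psi$ is an isomorphism, since it then coincides with the kernel of the continuous map $\mathcal Z_0^{n-p,n-q}(\bar D,E^*)\to \mathcal A^*$ induced by pairing.

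For Part (2), the map $M:\mathcal O(X,E^*)^m\to H_0^{n-p,n-q}(\bar D,E^*)$, $(h_1,\ldots,h_m)\mapsto [\sum_i h_i|_{\bar D}\cdot\gamma_i]$, is well-defined because $\sum h_i\gamma_i$ is $\bar\partial$-closed ($h_i$ holomorphic, $\gamma_i$ closed) and vanishes on $\partial D$. Its composition with $\Psi$ yields
\[
\tilde M:\mathcal O(X,E^*)^m\to\mathcal A^*,\qquad \tilde M(h)(f)=\sum_{i,j}\int_D\langle f_j,h_i\rangle\,\chi_j\wedge\gamma_i,
\]
which is continuous. To construct $\mathcal B$, I would apply Theorem \ref{thm2}(1) to $E^*\in\Sigma_0(X)$ (noting $E^*\in\Sigma_0(X)$ since dualizing $E\oplus E'=X\times B$ yields $E^*\oplus (E')^*=X\times B^*$) to obtain a closed complemented subspace $\mathcal A^\vee\subset\mathcal O(X,E^*)^m$ and finite $S^\vee\subset\bar D$ with restriction iso $\mathcal A^\vee\cong \mathcal A^\vee|_{S^\vee}$. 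Define $\mathcal B$ as a topological complement of $\ker\tilde M$ in $\mathcal O(X,E^*)^m$ chosen compatibly with $\mathcal A^\vee$ and take $S^*$ to contain $S^\vee$, so that evaluation at $S^*$ remains an isomorphism $\mathcal B\cong\mathcal B|_{S^*}$. Then $M|_\mathcal B$ is an isomorphism onto $H_0^{n-p,n-q}(\bar D,E^*)$ and $\mathcal B\cong \mathcal A^*$ as Banach spaces, which simultaneously establishes Part (1) and Part (2).

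The main obstacle is the surjectivity of $\tilde M$ onto $\mathcal A^*$: every functional $\phi\in\mathcal A^*$ should be realized as $\phi(f)=\sum_{i,j}\int_D\langle f_j,h_i\rangle\,\chi_j\wedge\gamma_i$ for some $h\in\mathcal O(X,E^*)^m$. Combining the scalar duality $\int_D\chi_i\wedge\gamma_j=\delta_{ij}$ with the evaluation iso $\mathcal A\cong\mathcal A|_S$ from Theorem \ref{thm2}(1), I would extend $\phi$ by Hahn--Banach to $(C(S,E)^m)^*=C(S,E^*)^m$, represented by a tuple $(\lambda_{ik})\in\prod_k (E^*_{x_k})^m$ where $S=\{x_1,\ldots,x_r\}$, and then interpolate this tuple by holomorphic sections of $E^*$ furnished by Theorem \ref{thm2}(1) applied to $E^*$ (after possibly enlarging the finite set to $S\cup S^\vee$). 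The delicate step is verifying that integration of the resulting $\sum h_i\gamma_i$ against any $f\in\mathcal A$ actually reproduces $\phi(f)$; this requires a local cutoff and correction argument near $S$ together with the scalar duality, so that the integral $\int_D\langle f_j,h_i\rangle\,\chi_j\wedge\gamma_i$ effectively factors through pointwise evaluation of $f$ at $S$ modulo $\bar\partial$-exact boundary-vanishing pieces in the Dirichlet complex.
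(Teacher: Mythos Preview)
Your plan has two genuine gaps.

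\textbf{Injectivity of $\Psi$ is never addressed.} You say closedness of $\mathcal B_0^{n-p,n-q}(\bar D,E^*)$ will be a ``byproduct'' of $\Psi$ being an isomorphism, but you only sketch surjectivity. Injectivity is the statement that if $\xi\in\mathcal Z_0^{n-p,n-q}(\bar D,E^*)$ pairs to zero with every $\theta\in\mathcal Z^{p,q}(\bar D,E)$, then $\xi\in\mathcal B_0^{n-p,n-q}(\bar D,E^*)$. This is not formal: the paper proves it by building a continuous operator $Q_E:\Lambda^{n-p,n-q}(\bar D,E^*)\to\Lambda_0^{n-p,n-q-1}(\bar D,E^*)$ with $\bar\partial Q_E\xi=\xi$ for such $\xi$, obtained by tensoring the scalar operator $Q_{X\times\mathbb C}$ (built from the $\bar\partial$-Neumann operator) with ${\rm Id}_{B^*}$ and extending by density---the same mechanism as for $G_B,H_B$ in Theorem~\ref{thm2}. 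Without this, you cannot conclude that $\mathcal B_0$ is closed or that $\Psi$ is injective.

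\textbf{Your surjectivity argument does not work.} The integral $\int_D\langle f_j,h_i\rangle\,\chi_j\wedge\gamma_i$ depends on the holomorphic function $\langle f_j,h_i\rangle$ on all of $\bar D$, and there is no mechanism forcing it to factor through evaluation of $f$ at a finite set $S$. The forms $\gamma_i$ are global on $\bar D$; a ``local cutoff near $S$'' destroys $\bar\partial$-closedness and the resulting error terms are not $\bar\partial$-exact in the Dirichlet complex in any controllable way. Applying Theorem~\ref{thm2} to $E^*$ gives you a space $\mathcal A^\vee\cong H^{p,q}(\bar D,E^*)$, which is the wrong cohomology group: it is dual to $H_0^{n-p,n-q}(\bar D,E)$, not to $H^{p,q}(\bar D,E)$, so it has no direct relation to the $\mathcal B$ you want.

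The paper avoids both problems by treating the trivial bundle $E=X\times B$ first. There $\mathcal A=B^m$ consists of \emph{constant} sections, so for $f=(b_1,\dots,b_m)$ and $h=(b_1^*,\dots,b_m^*)$ one computes directly $J_E(\sum b_j\chi_j,\sum b_i^*\gamma_i)=\sum_{i,j}b_i^*(b_j)\int_D\chi_j\wedge\gamma_i=\sum_i b_i^*(b_i)$, so surjectivity of $\Psi$ is immediate and $\mathcal B=(B^*)^m$. For general $E\in\Sigma_0(X)$, the paper does not attempt surjectivity directly; instead it writes $E_2=E\oplus E_1$ trivial, observes $S_{E_2}=S_E\oplus S_{E_1}$ under the induced splittings of cohomology, and reads off that $S_E$ is an isomorphism from the trivial case. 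The space $\mathcal B$ is then defined as the image of $H_0^{n-p,n-q}(\bar D,E^*)$ under $\hat i^*\circ M_{E_2}^{-1}\circ(\bar r^*)_0^{n-p,n-q}$.
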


The isomorphism in (1) is induced by the map associating to each $\xi \in \mathcal Z_0^{n-p,n-q}(\bar{D},E^*)$ a linear functional
$$
\mathcal Z^{p,q}(\bar{D},E) \ni \theta \mapsto J_E(\theta,\xi),
$$
where 
$$
J_E:\Lambda^{p,q}(\bar{D},E) \times \Lambda^{n-p,n-q}(\bar{D},E^*) \rightarrow \mathbb C
$$ 
is a certain continuous bilinear form, see Section 5 below. (In particular, if $E:=X\times\mathbb C$, $J_E(\theta,\xi):=\int_D \theta\wedge\xi$.)

\begin{remark}
It is not clear yet to what extent assertions of Theorems \ref{thm2} and \ref{thm6} are valid for holomorphic Banach vector bundles on $X$ not in $\Sigma_0(X)$. In particular, is it true that in this general setting spaces $H^{p,q}(\bar D,E)$ are Hausdorff (in the corresponding quotient topologies), and what can be said about the Serre-type duality between
 $H^{p,q}(\bar D,E)$  and  $H_0^{n-p,n-q}(\bar D,E^*)\, $?
\end{remark}

\section{Applications}

%\subsection{Motivating example}
\label{motivation}
As it was mentioned in the Introduction, forms taking values in holomorphic Banach vector bundles arise as an equivalent presentation of forms defined on subdomains of coverings of complex manifolds and satisfying additional constraints along the fibres of the coverings. In what follows, we outline the main features of this construction (see \cite{Br}, \cite{BrK} for details).

Let $r:X'\rightarrow X$ be a regular covering with a deck transformation group $G$ of a connected complex manifold $X$. Assume that $X'$ is equipped with  a path metric $d'$ determined by the pullback to $X'$ of a smooth hermitian metric on $X$.

\begin{definition}\label{def1}
By $C_B(X')=C_B(X',X,r)$ we denote the space of complex continuous functions $f:X' \rightarrow \mathbb C$ 
uniformly continuous with respect to metric $d'$ on subsets $r^{-1}(U)$, $U \Subset X$, and such that
for each $x \in X'$ functions $G \ni g \mapsto f(g \cdot x)$
belong to a complex Banach space $B$
of functions $u:G \rightarrow \mathbb C$ such that
$$
u \in B,~~g \in G \quad \Rightarrow \quad R_g u \in B,\quad\text{where}\quad
R_g(u)(h):=u(h g)\ (h\in G),
$$
and each $R_g$ is an invertible bounded linear operator on $B$. 
\end{definition}

Here are some examples of such spaces $B$.
\begin{example}
\label{ex2}
\textit{Uniform algebras.~} As space $B$ one can take a closed unital 
subalgebra of the algebra $\ell_\infty(G)$ of bounded complex functions on $G$ (with pointwise multiplication and $\sup$-norm) invariant with respect to the action of $G$ on $\ell_\infty(G)$ by right translations $R_g$, $g \in G$, e.g.,~algebra $\ell_\infty(G)$ itself, algebra $c(G)$ of bounded complex functions on $G$ that admit continuous extensions to the one-point compactification of group $G$, algebra  $AP(G)$ of the von Neumann almost periodic functions on group $G$ (i.e.~uniform limits on $G$ of linear combinations of matrix elements of irreducible unitary representations of $G$), etc. 
If group $G$ is finitely generated, then in addition to $c(G)$ one can take subalgebras $c_{E}(G) \subset \ell_\infty(G)$ of functions having limits at `$\infty$' along each `path' (see \cite{BrK} for details).

\smallskip

\noindent\textit{Orlicz spaces.~}Let $\mu$ be a $\sigma$-finite regular Borel measure on $G$ such that for each $g\in G$ there exists a constant $c_g>0$ so that $\mu(h\cdot g)\le c_g\cdot\mu (h)$ for all $h\in G$. Let $\Phi: [0,\infty)\to [0,\infty)$ be a convex function such that
\[
\lim_{x\to\infty}\frac{\Phi(x)}{x}=\infty\qquad \text{and}\qquad \lim_{x\to 0^+}\frac{\Phi(x)}{x}=0.
\]
%(``Young function'').
As space $B$ one can take the space $\ell_\Phi$ of complex $\mu$-measurable functions on $G$ such that $\int_G\Phi(|f|)d\mu<\infty$ endowed with norm
\[
\|f\|_{\Phi}:=\inf\left\{C\in (0,\infty)\, :\, \int_G\Phi\left(\frac{|f|}{C}\right)d\mu\le 1\right\}.
\]
If $\Phi(t):=t^p$, $1< p<\infty$, then one obtains classical spaces $\ell^p(G,\mu)$. 

As measure $\mu$ one can take, e.g., the counting measure $\mu_c$ on $G$, in which case all $c_g=1$. 
If group $G$ is finitely generated, one can take $\mu:=e^u\mu_c$, where $u:G\to \mathbb R$ is a uniformly continuous function with respect to the $G$-invariant metric on $G$ induced by the natural metric on the Cayley graph of $G$ defined by a fixed family of generators of $G$. 

\end{example}

It is easily seen that the definition of space $C_B(X')$ does not depend on the choice of the hermitian metric on $X$.
If we fix a cover $\mathcal U$ of $X$ by simply connected relatively compact coordinate charts and for a given chart $U \in \mathcal U$ endow the `cylinder' $U':=r^{-1}(U)$ with local coordinates pulled back from $U$ (so that in these coordinates $U'$ is naturally identified with $U \times G$), then every function $f$ in $C_B(X')$, restricted to $U'$, can be viewed as a continuous function on $U$ taking values in space $B$. 

We equip $C_B(X')$ with the Fr\'{e}chet topology defined by the family of seminorms $\|\cdot\|_{U}$, $U\Subset X$,
\[
\|f\|_{U}:=\sup_{x\in U}\|f_x\|_B, \quad f\in C_B(X'),
\]
where $f_x(g):=f(g\cdot x)$, $g\in G$, and $\|\cdot\|_B$ is the norm of $B$.

By $\mathcal O_B(X'):=C_B(X') \cap \mathcal O(X')$ we denote the subspace of holomorphic functions in $C_B(X')$.

\begin{example}[Bohr's almost periodic functions, see, e.g.,~\cite{BrK} for details]
\label{ex}
A tube domain $T'=\mathbb R^n+i\Omega \subset \mathbb C^n$, where $\Omega \subset \mathbb R^n$ is open and convex, can be viewed as a regular covering $r:T' \rightarrow T\, (:=r(T') \subset \mathbb C^n)$ with deck transformation group $\mathbb Z^n$, where
\begin{equation*}
r(z):=\bigl(e^{i z_1}, \dots, e^{i z_n}\bigr), \quad z=(z_1,\dots,z_n) \in T'.
\end{equation*}
Let $B=AP(\mathbb Z^n)$ be the complex Banach algebra of the von Neumann almost periodic functions on group $\mathbb Z^n$ endowed with $\sup$-norm. Then $\mathcal O_B(T')\, (=:\mathcal O_{AP}(T'))$ coincides with the algebra of holomorphic almost periodic functions on $T'$, i.e.~uniform limits on tube subdomains $T''=\mathbb R^n+i\Omega''$ of $T'$, $\Omega'' \Subset \Omega$, of exponential polynomials
\begin{equation*}
z\mapsto\sum_{k=1}^m c_ke^{i \langle z,\lambda_k\rangle}, \quad z\in T',\quad c_k \in \mathbb C, \quad \lambda_k \in \mathbb R^n,
\end{equation*}
where $\langle\cdot,\cdot\rangle$ is the Hermitian inner product on $\mathbb C^n$,
and
$C_{B}(T')\, (=:C_{AP}(T'))$ coincides with the algebra of continuous uniformly almost periodic functions on $T'$. 
%-- the uniform limits (on tube subdomains $T''$ as above) of 
%\begin{equation*}
%z\mapsto\sum_{k=1}^m c_k\bigl(\Imag(z)\bigr)e^{i \langle z,\lambda_k\rangle}, \quad z\in T',\quad c_k \in C(\Omega), \quad \lambda_k \in \mathbb R^n, \quad \Imag(z)=(\Imag(z_1),\dots,\Imag(z_n)) \in \Omega.
%\end{equation*}
\end{example}

The theory of almost periodic functions was created in the 1920s by H.~Bohr and nowadays is widely used in
various areas of mathematics including number theory, harmonic analysis, differential equations (e.g.,~KdV equation), etc. 
We are interested, in particular, in studying cohomology groups of spaces of differential forms with almost periodic coefficients. Such forms arise as the special case of the following

\begin{definition}
By $\Lambda^{p,q}_B(X')=\Lambda^{p,q}_B(X',X,r)$ we denote the subspace of $C^\infty$ $(p,q)$-forms $\omega$ on $X'$ such that in each `cylinder' $U'=r^{-1}(U)\, (\cong U\times G)$, $U \in \mathcal U$, in local coordinates pulled back from $U$,
$$
\omega|_{U'}(z,\bar{z},g)=\sum_{|\alpha|=p, \,|\beta|=q} f_{\alpha,\beta}(z,\bar{z},g)\, dz_\alpha \wedge d\bar{z}_\beta,
$$
where $U \ni z \mapsto f_{\alpha,\beta}(z,\bar{z},\cdot)$ are Fr\'{e}chet $C^\infty$ $B$-valued functions (cf.~subsection \ref{results}).

\smallskip

For a subdomain $D\Subset X$ we set $D':=r^{-1}(D)$ and
$\Lambda_B^{p,q}(\bar{D}'):=\Lambda_B^{p,q}(X')|_{\bar{D}'}$.
\end{definition}

Comparing definitions of spaces $\Lambda_B^{p,q}(\bar{D}')$ and $\Lambda^{p,q}(\bar{D},E_{X'})$, where $\pi:E_{X'} \rightarrow X$ is the holomorphic Banach vector bundle with fibre $B$ associated to regular covering $r:X' \rightarrow X$ (viewed as a principal bundle on $X$ with fibre $G$, see e.g.,~\cite{BrK}), and likewise endowing $\Lambda_B^{p,q}(\bar{D}')$ with a sequence of $C^k$-like seminorms, we obtain isomorphisms of Fr\'{e}chet spaces
\begin{equation}
\label{isom}
\Lambda^{p,q}_B(\bar{D}') \cong \Lambda^{p,q}(\bar{D},E_{X'})
\end{equation}
commuting with the corresponding $\bar\partial$ operators. These induce (algebraic) isomorphisms of the corresponding cohomology groups:
\begin{equation}\label{iso}
H^{p,q}_{B}(\bar{D}')\cong H^{p,q}(\bar D,E_{X'}),
\end{equation}
where 
\[
\begin{array}{c}
\displaystyle
H^{p,q}_{B}(\bar{D}'):=\mathcal Z^{p,q}_{B}(\bar{D}')/\mathcal B^{p,q}_{B}(\bar{D}');\\
\\
\mathcal Z^{p,q}_{B}(\bar{D}'):=\{\omega \in \Lambda^{p,q}_{B}(\bar{D}')\,:\, \bar{\partial} \omega=0 \},\qquad
\mathcal B^{p,q}_{B}(\bar{D}'):=\bar{\partial} \Lambda^{p,q-1}_{B}(\bar{D}').
\end{array}
\]

\smallskip

Now, suppose that $D\Subset X$ has $Z(q)$-property. Let $f:X\rightarrow Y$ be a holomorphic map into a connected Stein manifold $Y$. Then $f$ induces a homomorphism of fundamental groups $f_*:\pi_1(X)\rightarrow\pi_1(Y)$. Without loss of generality, we may and will assume that $f_*$ is an epimorphism. (Indeed, if $H:=f_*(\pi_1(X))$ is a proper subgroup of $\pi_1(Y)$, then by the covering homotopy theorem,
there exist an unbranched covering $p: Y'\rightarrow Y$ such that $\pi_1(Y')=H$, and a holomorphic map $f':X\rightarrow Y'$ such that $f=p\circ f'$.
Moreover, $Y'$ is Stein. Thus, we may replace $f$ by $f'$.) \smallskip

Next, let $r:X'\rightarrow X$ be a regular covering with a deck transformation group $G$ isomorphic to a quotient group of $\pi_1(Y)$. If
$\tilde r: Y'\rightarrow Y$ is the regular covering of $Y$ with the deck transformation group $G$, then by the covering homotopy theorem
there exists a holomorphic map $f': X'\rightarrow Y'$ such that $f\circ r=\tilde r\circ f'$. This implies that $E_{X'}=f^*E_{Y'}$ (here $E_{Y'}\rightarrow Y'$ is the holomorphic Banach vector bundle with fibre $B$ defined similarly to $E_{X'}$ above). In particular, $E_{X'}\in \Sigma_0^s(X)$, see Example \ref{sigma_ex}, and hence Theorem \ref{thm2} can be applied to describe cohomology groups $H^{p,q}_{B}(\bar{D}')$. 
Under the above assumptions we obtain (as before, we ignore all objects related to $m=0$): 

\begin{theorem}\label{te3.5}
Let $\{\chi_i'\}_{i=1}^m$ be the pullback to $\bar D'$ of a basis in $\mathcal H^{p,q}(\bar{D})$.

(1) There exist a closed complemented subspace
$\mathcal A \subset \mathcal O_B(X')^m$ and a finite subset $S\subset \bar D$ such that 
\begin{itemize}
\item[(a)]
$\mathcal A|_{S'}$, $S':=r^{-1}(S)$, is a closed 
subspace of the Banach space $(C_B(X')|_{S'})^m\cong B^{cm}$, $c:={\rm card}\, S$, and
the restriction $\mathcal A\rightarrow \mathcal A|_{S'}$ is an isomorphism of Fr\'{e}chet spaces;
\item[(b)]
$\mathcal B_B^{p,q}(\bar D)$ is a closed subspace of the  Fr\'{e}chet space $\mathcal Z_B^{p,q}(\bar{D})$ and the linear map $L:\mathcal B_B^{p,q}(\bar D)\oplus \mathcal A\rightarrow \mathcal Z_B^{p,q}(\bar{D})$,
\[
L\bigl(\eta, (f_1,\dots,f_m)\bigr):=\eta+\sum_{i=1}^m f_i|_{\bar{D'}}\cdot\chi_i',\quad \eta\in \mathcal B_B^{p,q}(\bar D),\ (f_1,\dots,f_m)\in \mathcal A,
\]
is an isomorphism of Fr\'{e}chet spaces.
\end{itemize}

(2) If the group $GL(B)$ of invertible bounded linear operators on $B$ 
is contractible, then the restriction map $\mathcal A\rightarrow \mathcal A|_{\pi^{-1}(x)}\cong B^m$ is a Banach space isomorphism for each $x\in X$.
\end{theorem}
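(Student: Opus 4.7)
The plan is to prove Theorem~\ref{te3.5} by direct transfer of Theorem~\ref{thm2} to the covering setting through the identification (\ref{isom}). First, I would observe that the Banach bundle $E_{X'}$ lies in $\Sigma_0^s(X)$: this is essentially established in the paragraph preceding the theorem, where it is shown that $E_{X'}=f^*E_{Y'}$ for a holomorphic map $f:X\to Y$ into a Stein manifold, so Example~\ref{sigma_ex} applies. Consequently, both parts of Theorem~\ref{thm2} are available for $E:=E_{X'}$ and the basis $\{\chi_i\}_{i=1}^m\subset\mathcal H^{p,q}(\bar D)$ whose pullback to $\bar D'$ is $\{\chi_i'\}_{i=1}^m$.

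Applying Theorem~\ref{thm2}(1) yields a closed complemented subspace $\mathcal A_E\subset \mathcal O(X,E_{X'})^m$ and a finite set $S\subset \bar D$ such that the restriction $\mathcal A_E\to \mathcal A_E|_S\subset C(S,E_{X'})^m$ is a Fr\'{e}chet space isomorphism onto a closed subspace, and the map $L_E:\mathcal B^{p,q}(\bar D,E_{X'})\oplus \mathcal A_E\to \mathcal Z^{p,q}(\bar D,E_{X'})$ sending $(\eta,(f_1,\dots,f_m))\mapsto \eta+\sum_i f_i|_{\bar D}\cdot\chi_i$ is a Fr\'{e}chet isomorphism.

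The central translation step is the canonical identification $\mathcal O(X,E_{X'})\cong \mathcal O_B(X')$ arising from the construction of $E_{X'}$ as the holomorphic Banach bundle with fibre $B$ associated to the principal $G$-bundle $r:X'\to X$. The same construction gives $C(S,E_{X'})\cong C_B(X')|_{S'}$ with $S':=r^{-1}(S)$, since by definition the restriction of an element of $C_B(X')$ to the fibre $r^{-1}(x)\cong G$ for each $x\in S$ is an element of $B$; hence $C(S,E_{X'})^m\cong B^{cm}$ with $c:={\rm card}\,S$. Transferring $\mathcal A_E$ along the first isomorphism defines the subspace $\mathcal A\subset \mathcal O_B(X')^m$ of the statement, and assertion~(1)(a) follows at once. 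For~(1)(b) I would combine (\ref{isom}), which commutes with $\bar\partial$, with $L_E$ to produce the map $L$; closedness of $\mathcal B^{p,q}_B(\bar D')$ in $\mathcal Z^{p,q}_B(\bar D')$ is then inherited from the corresponding fact on the bundle side recorded in Remark~\ref{rem1}(1). For~(2) I would apply Theorem~\ref{thm2}(2): under the identification $\mathcal O(X,E_{X'})\cong \mathcal O_B(X')$, evaluation of a section of $E_{X'}$ at a point $x\in X$ corresponds precisely to restriction of the associated element of $\mathcal O_B(X')$ to the fibre $r^{-1}(x)\cong G$, so the contractibility hypothesis on $GL(B)$ yields the stated Banach space isomorphism.

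The proof is therefore essentially a dictionary between the Banach bundle language of Section~2 and the covering language of Section~3, so I do not anticipate a substantive new obstacle. The only point requiring some care is verifying that the identifications $\mathcal O(X,E_{X'})\cong \mathcal O_B(X')$ and $C(S,E_{X'})\cong C_B(X')|_{S'}$ are isomorphisms of topological vector spaces and not merely algebraic; this is routine in the cylinder charts $r^{-1}(U)\cong U\times G$ underlying both the definition of the Fr\'{e}chet topology on $\mathcal O_B(X')$ and the local trivializations of $E_{X'}$, but requires some explicit bookkeeping with the seminorms $\|\cdot\|_{U}$.
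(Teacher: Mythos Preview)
Your proposal is correct and matches the paper's approach exactly: the paper does not give a separate proof of Theorem~\ref{te3.5} but rather presents it as an immediate consequence of Theorem~\ref{thm2} applied to $E_{X'}\in\Sigma_0^s(X)$ via the identifications \eqref{isom}--\eqref{iso}, which is precisely what you outline. Your additional remark about checking that the identifications are topological (not merely algebraic) isomorphisms is a fair point of care, though the paper treats this as implicit in the construction of $E_{X'}$ and the definition of the Fr\'{e}chet topologies.
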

\begin{remark}\label{rem3.6}
(1)~The result shows that $H_B^{p,q}(\bar{D})$ is a Fr\'{e}chet space, trivial if $m=0$ and isomorphic to a closed subspace of the Banach space $B^{cm}$ otherwise.\smallskip

(2)~As follows from the assumptions, Theorem \ref{te3.5} is applicable to nontrivial coverings $r:X'\rightarrow X$ provided that $X$ admits a holomorphic map into a Stein manifold that induces a nontrivial homomorphism of the corresponding fundamental groups. In particular,
if $X$ is Stein, the theorem is valid for any regular covering $r:X'\rightarrow X$. If, in addition, $D$ is homotopically equivalent to $X$, then $H_B^{p,q}(\bar{D})=0$ for $p+q>n:={\rm dim}\, X$. Indeed, in this case, due to Remark \ref{rem1}\,(2), $\mathcal H^{p,q}(\bar{D})$ has a basis consisting of $d$-closed forms. Since $X$, being Stein, is homotopically equivalent to an $n$-dimensional CW-complex, these forms must be $d$-exact for $p+q>n$ and, hence, equal to zero (because they are harmonic with respect to the Laplacian defined by $d$). This implies the required statement.\smallskip

(3)~In view of Remark \ref{rem1}\,(3), group $GL(B)$ is contractible for spaces of Example \ref{ex2} $B=\ell^p(G,\mu)$, $1<p<\infty$, $c(G)$ or $AP(G)$\footnote{Recall that $AP(G)\cong C(bG)$, where $bG$ is a compact topological group called the {\em Bohr compactification} of $G$.} in case $G$ is infinite and maximally almost periodic (i.e.~finite-dimensional unitary representations separate points of $G$, see, e.g.,~\cite{BrK} for examples of such groups). 
In all these cases, under assumptions of Theorem \ref{te3.5}, we obtain that $H_B^{p,q}(\bar{D})\cong B^m$. In particular, $H_{AP(\mathbb Z^n)}^{p,q}(\bar{D})\cong AP(\mathbb Z^n)^{m}$, where $D\Subset T$ and $r:T'\rightarrow T$ is the covering of Example \ref{ex} ($T\subset\mathbb C^n$ is Stein because it is a relatively complete Reinhardt domain, see, e.g.,~\cite{Shab}).\smallskip

(4)~Similarly, one can reformulate Theorem \ref{thm6} to deal with forms in $\Lambda_{B^*}^{p,q}(X')$ vanishing on $\partial D':=r^{-1}(\partial D)$ in case the dual space $B^*$ of $B$ is a function space on $G$ satisfying conditions of Definition \ref{def1}. This holds, for instance, if $B$ is a reflexive Orlicz space $\ell_{\Phi}$ satisfying assumptions of Example \ref{ex2} or $c(G)$ and $\ell^1(G,\mu)$ spaces of this example. On the other hand, for space $AP(G)$ with $G$ as above the dual $AP(G)^*$ is the space of regular complex Borel measures on $bG$ (the Riesz representation theorem) and therefore to obtain a version of Theorem \ref{thm6} in this case one works with forms in $\Lambda_0^{r,t}(\bar D,E_{X'}^*)$. We leave the corresponding details to the reader.

    \end{remark}

\section{Proof of Theorem \ref{thm2}}

\subsection{Banach-valued differential forms} 
\label{results}

%\smallskip

Let $U\Subset\mathbb C^n$ be a bounded open subset and $B$ a complex Banach space with norm $\|\cdot\|_B$. We fix holomorphic coordinates $z=(z_1,\dots, z_n)$ on $\mathbb C^n$. For tuples $\alpha=(\alpha_1, \dots ,\alpha_p)\in\mathbb N^p$ and $\beta=(\beta_1,\dots,\beta_q)\in\mathbb N^q$,  each consisting of increasing sequences of numbers not exceeding $n$, we set 
\[
|\alpha|:=p,\quad |\beta|:=q\quad\text{and}\quad dz_\alpha\wedge d\bar{z}_\beta:=dz_{\alpha_1}\wedge\cdots\wedge dz_{\alpha_p}\wedge d\bar{z}_{\beta_1}\wedge\cdots\wedge d\bar{z}_{\beta_q}.
\]
As usual, in real coordinates $x_1,\dots, x_{2n}$, $z_j:=x_j+i x_{n+j}$, $1\le j\le n$, on $\mathbb R^{2n}$, partial (Fr\'{e}chet) derivatives $D^\gamma$, $\gamma=(\gamma_1,\dots,\gamma_{2n})\in\mathbb Z_+^{2n}$, of order ${\rm ord}(\gamma):=\gamma_1+\cdots+\gamma_{2n}$ are given by the formulas
\[
D^\gamma:=\frac{\partial^{\gamma_1}}{\partial x_1^{\gamma_1}}\circ\cdots\circ \frac{\partial^{\gamma_{2n}}}{\partial x_{2n}^{\gamma_{2n}}}.
\]

Further, for a $C^k$ $B$-valued $(p,q)$-form $\eta$ on $U$,
\[
\eta(z,\bar{z})=\sum_{|\alpha|=p,|\beta|=q}f_{\alpha,\beta}(z,\bar{z})dz_\alpha\wedge d\bar{z}_\beta,
\]
 and a subset $W\subset U$ we define
\begin{equation}\label{norms}
\begin{array}{l}
\displaystyle
\|\eta\|_{k,W,B}:=\sum_{{\rm ord}(\gamma)\le k, |\alpha|=p,|\beta|=q}\left(\sup_{z\in W}\|D^\gamma f_{\alpha,\beta}(z,\bar{z})\|_B\right)\quad\text{and}\\
\\
\displaystyle
\|\eta\|_{k,W,B}':=\sup_{g\in B^*,\, \|g\|_{B*}\le 1}\left\{\sum_{{\rm ord}(\gamma)\le k, |\alpha|=p,|\beta|=q}\left(\sup_{z\in W}\left|g\bigl(D^\gamma f_{\alpha,\beta}(z,\bar{z})\bigr)\right|\right)\right\}.
\end{array}
\end{equation}
One easily shows that
\begin{equation}\label{equiv}
\frac{1}{c_{p,q,k,n}}\|\eta\|_{k,W,B}\le \|\eta\|_{k,W,B}'\le\|\eta\|_{k,W,B},
\end{equation}
where $c_{p,q,k,n}$ is the cardinality of the set of indices of sums in \eqref{norms}.\smallskip

By $\hat{\Lambda}^{p,q}(W,B)$ we denote the space of $C^\infty$ $B$-valued $(p,q)$-forms $\eta$ on $U$ such that $\|\eta\|_{k,W,B}<\infty$ for all $k \geqslant 0$.
In a standard way one proves that space $\hat{\Lambda}^{p,q}(U,B)$ is complete in the Fr\'{e}chet topology determined by norms $\{\|\cdot\|_{k,U,B}\}_{k\in\mathbb Z_+}$ (cf. \cite[Th.\,7.17]{R}).\smallskip

Now, let us fix a finite family of coordinate charts $(U_j,\varphi_j)$ on $X$ such that $\mathcal U=(U_j)$ forms a finite open cover of an open neighbourhood of $\bar{D}$ and each $\varphi_j$ maps a neighbourhood of $\bar{U}_j$ biholomorphically onto a bounded domain of $\mathbb C^n$. Let $\pi: E\rightarrow X$ be a holomorphic Banach vector bundle with fibre $B$. Using fixed trivializations $\psi_j:E\rightarrow \bar{U}_j\times B$ of $E$ over $\bar{U}_j$ and the holomorphic coordinates on $U_j$ pulled back by $\varphi_j$ from  $\mathbb C^n$, we define %Fr\'{e}chet 
spaces $\hat{\Lambda}^{p,q}(W,E)$, $W\subset U_j\cap D$, of $C^\infty$ $E$-valued $(p,q)$-forms on $U_j\cap D$ as pullbacks of spaces 
$\hat{\Lambda}^{p,q}(\varphi_j(W),B)$.  Seminorms on $\hat{\Lambda}^{p,q}(W,E)$ obtained by pullbacks of seminorms $\|\cdot\|_{k,\varphi_j(W),B}$ are denoted by $\|\cdot\|_{k,W,E}$.
Finally, we equip the space $\Lambda^{p,q}(\bar{D},E):=\Lambda^{p,q}(X,E)|_{\bar{D}}$ of $C^\infty$ $E$-valued forms on $\bar{D}$ with topology  $\tau_{p,q}=\tau_{p,q}(E)$  defined by the sequence of norms $\|\cdot\|_{k,\bar D,E}$, $k\ge 0$, 
\[
\|\eta\|_{k,\bar D,E}:=\sum_j \|\eta|_{U_j\cap D}\|_{k,U_j\cap D,E},\quad \eta\in \Lambda^{p,q}(\bar{D},E).
\] 
Using, e.g., the Hestens extension theorem \cite{He}, one checks easily that $\bigl(\Lambda^{p,q}(\bar{D},E),\tau_{p,q}\bigr)$ is a Fr\'{e}chet space and that topology $\tau_{p,q}$ is independent of the choice of coordinate charts $(U_j,\varphi_j)$ and trivializations $\psi_j$ as above.

If in the above construction we will take pullbacks of norms $\|\cdot\|_{k,\varphi_j(U_j\cap D),B}'$, denoted by $\|\cdot\|_{k,U_j\cap D,E}'$, then due to \eqref{equiv} the sequence of norms $\|\cdot\|_{k,\bar D,E}'$, $k\ge 0$, 
\[
\|\eta\|_{k,\bar D,E}':=\sum_j \|\eta|_{U_j\cap D}\|_{k,U_j\cap D,E}',\quad \eta\in \Lambda^{p,q}(\bar{D},E),
\] 
will produce the same topology on $\Lambda^{p,q}(\bar{D},E)$.

By our definitions, the standard operator 
$$\bar{\partial}:\bigl(\Lambda^{p,q}(\bar D,E),\tau_{p,q}\bigr) \rightarrow \bigl(\Lambda^{p,q+1}(\bar D,E),\tau_{p,q+1}\bigr)$$ 
is continuous. Hence, $\mathcal Z^{p,q}(\bar{D},E) \subset \Lambda^{p,q}(\bar{D},E)$ is a closed subspace.

\subsection{Proof of Theorem \ref{thm2}} \

\smallskip

{\bf A.} First we prove part (1) of the theorem for the trivial bundle $E=X\times B$, where $B$ is a complex Banach space. 
As the required subspace $\mathcal A\subset\mathcal O(X,E)^m$ we will take the space of constant maps $X\rightarrow B^m$ (naturally identified with $B^m$) and as the set $S$ a point of $D$. Then statement (a) of the theorem is obvious. 

Let us show that there exist continuous linear maps
$$
G_B:\Lambda^{p,q}(\bar{D},E) \rightarrow \Lambda^{p,q-1}(\bar{D},E),
$$
$$
H_B:\Lambda^{p,q}(\bar{D},E) \rightarrow \left\{\sum_{i=1}^m f_i\cdot\chi_i:(f_1,\dots,f_m)\in \mathcal A|_{\bar{D}}\right\}\subset \mathcal Z^{p,q}(\bar{D},E)
$$
such that 
\begin{equation}
\label{id_id0}
\omega=\bar{\partial}G_B(\omega)+H_B(\omega)\quad\text{for all}\quad \omega \in \mathcal Z^{p,q}(\bar{D},E).
\end{equation}
Then
\[
\bar{\partial}G_B\oplus H_B:\mathcal Z^{p,q}(\bar{D}, E) \rightarrow \mathcal B^{p,q}(\bar D, E)\oplus \underbrace{(B\otimes_{\mathbb C}\mathcal H^{p,q}(\bar{D}))}_{\cong \mathcal A|_{\bar{D}}}
\]
is an isomorphism of the corresponding Fr\'{e}chet spaces. By the definition its inverse coincides with the operator $L$ which completes the proof of the theorem in this case.

Indeed, for $B=\mathbb C$ existence of the operators $G_{\mathbb C}$ and $H_{\mathbb C}$ is proved in \cite[Ch.~III.1]{FK} (in the terminology of \cite{FK}, $G_{\mathbb C}:=\bar{\partial}^*N$, where $N$ is the ``$\bar{\partial}$-Neumann operator'' and $H_{\mathbb C}$ is the ``orthogonal projection'' onto $\mathcal H^{p,q}(\bar{D}$)). Their continuity in the corresponding Fr\'{e}chet topologies
follows from \cite[Th.~3.1.14]{FK} and the Sobolev embedding theorem.

In the case of the general bundle $E=X \times B$, first we define the required operators on the (algebraic) symmetric tensor product $B \otimes_{\mathbb C} \Lambda^{p,q}(\bar{D})\subset \Lambda^{p,q}(\bar{D},E)$ by the formulas
\[
G_B:={\rm Id}_B\otimes G_{\mathbb C},\qquad H_B:={\rm Id}_B\otimes H_{\mathbb C},
\]
where ${\rm Id}_B: B\rightarrow B$ is the identity operator. If $\omega \in B \otimes \Lambda^{p,q}(\bar{D})$, then due to the continuity of the scalar operators $G_{\mathbb C}$ and $H_{\mathbb C}$ we have, for all $k\ge 0$ and the corresponding norms,
\[
\begin{array}{l}
\displaystyle
\left\|G_B(\omega)\right\|_{k,\bar{D},E}':=\sup_{g\in B^*,\, \|g\|_{B^*}\le 1}\|\bigl(g\otimes {\rm Id}_{\Lambda^{p,q}(\bar{D})}\bigr)\bigl(G_B(\omega)\bigr)\|_{k,\bar{D},X\times\mathbb C}\medskip\\
\displaystyle
=\sup_{g\in B^*,\, \|g\|_{B^*}\le 1}\left\|G_{\mathbb C}\left(\bigl(g\otimes {\rm Id}_{\Lambda^{p,q}(\bar{D})}\bigr)(\omega)\right)\right\|_{k,\bar{D},X\times\mathbb C}\medskip\\
\displaystyle
\le M \cdot\sup_{g\in B^*,\, \|g\|_{B^*}\le 1}\|\bigl(g\otimes {\rm Id}_{\Lambda^{p,q}(\bar{D})}\bigr)(\omega)\|_{k+n+1,\bar{D},X\times\mathbb C}=M\cdot\|\omega\|_{k+n+1,\bar{D},E}'
\end{array}
\]
and, similarly, 
\[
\|H_B(\omega)\|_{k,\bar{D},E}'\le N \cdot \|\omega\|_{k,\bar{D},E}',
\]
where $M$ and $N$ are some constants independent of $\omega$ (but depending on $k,n,D$ and the data in definitions of the above norms).
\begin{remark}\label{rem4.1}
{\rm The shift of index in norms of inequalities for $G_B(\omega)$ results from the fact that in \cite[Th.~3.1.14]{FK} one considers $G_{\mathbb C}$ as a continuous operator between the corresponding Sobolev
spaces $W^k$ and therefore to switch to the case of our norms we must apply the Sobolev embedding theorem. On the other hand, the operator $H_{\mathbb C}$ is defined by the inner product with elements of a basis of $\mathcal H^{p,q}(\bar{D})$ and so its norm as an operator acting in $C^k$ spaces can be estimated directly without involving the Sobolev norms.
}
\end{remark}

The above norm estimates show that linear operators $G_B: \bigl(B \otimes_{\mathbb C} \Lambda^{p,q}(\bar{D}),\tau_{p,q}\bigr)\rightarrow \bigl(B \otimes_{\mathbb C} \Lambda^{p,q}(\bar{D}),\tau_{p,q-1}\bigr)$ and
$H_B: \bigl(B \otimes_{\mathbb C} \Lambda^{p,q}(\bar{D}),\tau_{p,q}\bigr)\rightarrow (B \otimes_{\mathbb C} \mathcal H^{p,q}(\bar{D}) ,\tau_{p,q})$ are uniformly continuous.
Since  $B \otimes \Lambda^{p,q}(\bar{D})$ is dense in $\bigl(\Lambda^{p,q}(\bar{D},E),\tau\bigr)$ (this can be easily seen using, e.g., ~approximation of local coefficients of forms in $\Lambda^{p,q}(\bar{D},E)$ by their Taylor polynomials and then patching these approximations together by suitable partitions of unity), the latter implies that $G_B$ and $H_B$ can be extended to continuous operators on $\bigl(\Lambda^{p,q}(\bar{D},E),\tau_{p,q}\bigr)$ with ranges in  $\Lambda^{p,q-1}(\bar{D},E)$ and $B \otimes_{\mathbb C} \mathcal H^{p,q}(\bar{D})$, respectively. We retain the same symbols for the extended operators.

Let us show that so defined operators satisfy identity \eqref{id_id0}. 

In fact, for each $g\in B^*$ the linear map $g\otimes {\rm Id}_{\Lambda^{p,q}(\bar{D})}: B\otimes_{\mathbb C}\Lambda^{p,q}(\bar{D})\rightarrow \Lambda^{p,q}(\bar{D})$ is uniformly continuous in the corresponding Fr\'{e}chet topologies and therefore is extended to  a linear continuous map 
\begin{equation}
\label{g_op}
\hat g_{p,q}: \Lambda^{p,q}(\bar{D},E)\rightarrow \Lambda^{p,q}(\bar{D}). 
\end{equation}
Clearly, 
\[
\hat g_{p,q}\circ\bar{\partial}=\bar{\partial}\circ\hat g_{p,q-1},\quad \hat g_{p,q-1}\circ G_B=G_{\mathbb C}\circ\hat g_{p,q}\quad\text{and}\quad \hat g_{p,q}\circ H_B=H_{\mathbb C}\circ\hat g_{p,q}.
\]
In particular, for $\omega\in \mathcal Z^{p,q}(\bar{D},E)$ we have $\hat g_{p,q}(\omega)\in\mathcal Z^{p,q}(\bar{D})$; hence, due to the previous identities and since \eqref{id_id0} is valid for $B=\mathbb C$, 
\[
\hat g_{p,q}\bigl(\bar{\partial}G_B(\omega)+H_B(\omega)\bigr)=\bar{\partial}G_{\mathbb C}(\hat g_{p,q}(\omega))+H_{\mathbb C}(\hat g_{p,q}(\omega))=\hat g_{p,q}(\omega)\quad\text{for all}\quad g\in B^*.
\] 
It is easily seen that the family of linear maps $\{\hat g_{p,q}\, :\, g\in B^*\}$ separates the points of $\Lambda^{p,q}(\bar{D},E)$. Therefore the latter implies that $\bar{\partial}G_B(\omega)+H_B(\omega)=\omega$ for all $\omega\in \mathcal Z^{p,q}(\bar{D},E)$, as required.

\smallskip

\textbf{B.~}Now, we consider the case of an arbitrary holomorphic Banach vector bundle $E \in \Sigma_0(X)$. By the definition, there exists a holomorphic Banach vector bundle $E_1\rightarrow X$ such that $E_2:=E\oplus E_1$ is holomorphically trivial Banach vector bundle with a fibre $B_2$. By $i:E\rightarrow E_2$ and $r:E_2\rightarrow E$, $r\circ i:= {\rm Id}_E$, we denote the corresponding bundle homomorphisms. In a natural way, they induce continuous linear maps of the corresponding Fr\'{e}chet spaces:
$$
\hat{i}^{p,q}:\bigl(\Lambda^{p,q}(\bar{D},E),\tau_{p,q}(E)\bigr) \rightarrow \bigl(\Lambda^{p,q}(\bar{D},E_2),\tau_{p,q}(E_2)\bigr),
$$
$$
\hat{r}^{p,q}: \bigl(\Lambda^{p,q}(\bar{D},E_2),\tau_{p,q}(E_2)\bigr) \rightarrow \bigl(\Lambda^{p,q}(\bar{D},E),\tau_{p,q}(E)\bigr)
$$
such that $\hat r^{p,q}\circ\hat i^{p,q}={\rm Id}_{\Lambda^{p,q}(\bar{D},E)}$. Moreover, $\hat i^{p,q}$ and $\hat r^{p,q}$ commute with the corresponding $\bar{\partial}$ operators and therefore 
$\hat i^{p,q}$ embeds $\mathcal Z^{p,q}(\bar{D},E)$ as a closed subspace into $\mathcal Z^{p,q}(\bar{D},E_2)$ and $\hat r^{p,q}$ maps $\mathcal Z^{p,q}(\bar{D},E_2)$
surjectively onto $\mathcal Z^{p,q}(\bar{D},E)$.

Next, we define continuous linear operators
$$
G_E := \hat{r}^{p,q-1} \circ G_{B_2} \circ \hat{i}^{p,q}:\Lambda^{p,q}(\bar{D},E)\rightarrow \Lambda^{p,q-1}(\bar{D},E),
$$
\[
H_E := \hat{r}^{p,q} \circ H_{B_2} \circ \hat{i}^{p,q}:\Lambda^{p,q}(\bar{D},E)\rightarrow \mathcal Z^{p,q}(\bar{D}, E),
\]
where $G_{B_2}$ and $H_{B_2}$ are operators constructed in part {\bf A} for the trivial bundle $E_2:=X\times B_2$. Due to identity \eqref{id_id0} for these operators we have
\begin{equation}\label{e4.6}
\omega=\bar{\partial}G_E(\omega)+H_E(\omega)\quad\text{for all}\quad \omega \in \mathcal Z^{p,q}(\bar{D},E).
\end{equation}
This implies (since $H_{B_2}$ maps $\bar{\partial}$-exact forms to $0$)
\[
H_E(\omega)=H_E^2(\omega)\quad\text{for all}\quad \omega \in \mathcal Z^{p,q}(\bar{D},E).
\]
Thus $H_E(\mathcal Z^{p,q}(\bar{D},E))$ is a closed complemented subspace of $\mathcal Z^{p,q}(\bar{D},E)$ and \eqref{e4.6} shows that $\mathcal Z^{p,q}(\bar{D},E)=\mathcal B^{p,q}(\bar{D},E)\oplus H_E(\mathcal Z^{p,q}(\bar{D},E))$.

Further, since each $\eta\in  B_2\otimes_{\mathbb C}\Lambda^{p,q}(\bar{D})$ is uniquely presented as $\eta=\sum_{i=1}^m c_i(\eta)\cdot\chi_i$ for some $c_i(\eta)\in B_2$, by the open mapping theorem the correspondence $\eta\mapsto (c_1(\eta),\dots,c_m(\eta))$ determines an isomorphism of the Fr\'{e}chet spaces $c: \bigl(B_2\otimes_{\mathbb C}\Lambda^{p,q}(\bar{D}), \tau_{p,q}(E_2)\bigr)\rightarrow B_2^m$.
In what follows we regard $B_2$ as the subset of $\mathcal O(X,E_2)$ consisting of constant sections. Also, we equip the space $\mathcal O(X,E)^m$ of holomorphic sections of $\oplus^m E$ with topology of uniform convergence on compact subsets of $X$.

We have the following sequence of continuous linear maps
\begin{equation}\label{e4.7}
\mathcal O(X,E)^m\stackrel{t}{\longrightarrow}\Lambda^{p,q}(\bar{D},E)\stackrel{H_{B_2}\circ\hat i^{p,q}}{\longrightarrow} B_2\otimes_{\mathbb C}\Lambda^{p,q}(\bar{D})\stackrel{c}{\longrightarrow} B_2^m\stackrel{\hat r}{\longrightarrow} \mathcal O(X,E)^m,
\end{equation}
where $t(f_1,\dots,f_m):=\sum_{i=1}^m f_i|_{\bar{D}}\cdot\chi_i$, $(f_1,\dots, f_m)\in \mathcal O(X,E)^m$, and $\hat r:=\oplus^m(\hat r^{0,0}|_{B_2})$.\medskip

Let us define the required space $\mathcal A\subset \mathcal O(X,E)^m$ of the theorem as the image of $\mathcal Z^{p,q}(\bar{D},E)$ under the map $P:=\hat r\circ c\circ H_{B_2}\circ\hat i^{p,q}$.\medskip

By our definition, $t\circ P=H_{E}$ on $\mathcal Z^{p,q}(\bar{D},E)$, and since $H_E$ is the identity map on $H_E(\mathcal Z^{p,q}(\bar{D},E))$ and zero on $\mathcal B^{p,q}(\bar{D},E)$, the subspace $\mathcal A\subset \mathcal O(X,E)^m$ is closed and $P: H_E(\mathcal Z^{p,q}(\bar{D},E))\to\mathcal A$ is an isomorphism with inverse $t|_{\mathcal A}$. Therefore, $P\circ t: \mathcal O(X,E)^m\rightarrow \mathcal O(X,E)^m$ is a projection onto $\mathcal A$, that is, $\mathcal A\subset \mathcal O(X,E)^m$ is a complemented subspace. Also, the map $L:={\rm Id}_{\mathcal B^{p,q}(\bar D,E)}\oplus t|_{\mathcal A}: \mathcal B(\bar{D},E)\oplus\mathcal A\rightarrow \mathcal Z^{p,q}(\bar{D},E)$ is an isomorphism of the Fr\'{e}chet spaces.  Note that ${\rm Ker}\, (P\circ t)$ consists of all $(f_1,\dots, f_m)\in\mathcal O(X,E)^m$ such that $t(f_1,\dots, f_m)\in\mathcal B^{p,q}(\bar D,E)$.
\smallskip

Now, to define the required set $S\subset\bar D$ of the theorem and to prove statement (a) let us prove, first, the following result.
\begin{lemma}\label{lem4.2}
The restriction map $R_{\bar D}:\mathcal O(X,E)^m\rightarrow C(\bar{D},E)^m$ to $\bar{D}$ maps $\mathcal A$ isomorphically onto a closed %complemented 
subspace of the space $A(\bar{D},E)^m$, where $A(\bar D,E)$ is the closure in $C(\bar D,E)$ of the trace space $\mathcal O(X,E)|_{\bar D}$. 
\end{lemma}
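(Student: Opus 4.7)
\smallskip

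\noindent\textit{Proof plan.} The plan is to split the argument into three stages: continuity, injectivity, and bicontinuity onto a closed image, with the last being the main technical point.

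Continuity of $R_{\bar D}|_{\mathcal A}$ is immediate, since uniform convergence on the compact set $\bar D \subset X$ is one of the seminorms defining the Fr\'{e}chet topology of $\mathcal O(X,E)^m$, and the inclusion $R_{\bar D}(\mathcal A) \subset A(\bar D,E)^m$ holds by definition of $A(\bar D,E)$. For injectivity, suppose $f=(f_1,\dots,f_m) \in \mathcal A$ satisfies $f|_{\bar D} = 0$. Then each $f_i \in \mathcal O(X,E)$ vanishes on the nonempty open set $D$ of the connected manifold $X$; composing locally with continuous linear functionals on $B$ and invoking the identity theorem for scalar holomorphic functions on $X$ gives $f_i \equiv 0$.

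The heart of the argument is bicontinuity, which I plan to establish via a reconstruction formula. The isomorphism $t|_{\mathcal A}:\mathcal A \to H_E(\mathcal Z^{p,q}(\bar D,E))$ obtained in part \textbf{B} above (as the inverse of $P|_{H_E(\mathcal Z^{p,q}(\bar D,E))}$) depends only on $f|_{\bar D}$, since $t(f) = \sum_{i=1}^m f_i|_{\bar D}\cdot\chi_i$. Hence it factors as $t|_{\mathcal A} = t' \circ R_{\bar D}|_{\mathcal A}$, where $t':R_{\bar D}(\mathcal A) \to H_E(\mathcal Z^{p,q}(\bar D,E))$ is the linear map $(h_1,\dots,h_m) \mapsto \sum_{i=1}^m h_i\cdot\chi_i$ (well-defined because elements of $R_{\bar D}(\mathcal A)$ are smooth, being restrictions of holomorphic sections to $\bar D$). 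If $t'$ is continuous from $R_{\bar D}(\mathcal A) \subset C(\bar D,E)^m$ into the Fr\'{e}chet space $H_E(\mathcal Z^{p,q}(\bar D,E))$, then $P|_{H_E(\mathcal Z^{p,q}(\bar D,E))} \circ t'$ is a continuous inverse of $R_{\bar D}|_{\mathcal A}$, making it a topological embedding. Completeness of $\mathcal A$ (as a closed subspace of the Fr\'{e}chet space $\mathcal O(X,E)^m$) then forces $R_{\bar D}(\mathcal A)$ to be complete, hence closed in the Banach space $A(\bar D,E)^m$.

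The main obstacle is proving continuity of $t'$: on the subspace $H_E(\mathcal Z^{p,q}(\bar D,E))$, the $C^0$-norm inherited from $C(\bar D,E)^m$ must control all the Fr\'{e}chet seminorms $\|\cdot\|'_{k,\bar D,E}$. Elements of this subspace have the form $\eta = \sum_{i=1}^m r(b_i)|_{\bar D}\cdot\chi_i$ for $(b_1,\dots,b_m) \in B_2^m$, where $r:E_2\to E$ is the bundle homomorphism of part \textbf{B}; the upper bound $\|\eta\|'_{k,\bar D,E} \le C_k\,\|(b_1,\dots,b_m)\|_{B_2^m}$ follows from the smoothness of the fixed $\chi_i$ and continuity of $r$. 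For the lower bound at $k=0$, I would use the dual-space formulation of \eqref{norms}: for each $g \in E^*$ realized in local trivializations, the scalar form $\sum_i g(r(b_i))\chi_i$ has $C^0$-norm bounded below (up to a constant) by $\max_i |g(r(b_i))|$, by linear independence of the $\chi_i$ on $\bar D$ and finite-dimensionality of $\mathcal H^{p,q}(\bar D)$. Taking supremum over $g$ yields $\max_i \sup_{x \in \bar D}\|r(b_i)(x)\|_E \lesssim \|\eta\|'_{0,\bar D,E}$; the identity theorem then lets us propagate this to control of $\|r(b_i)\|$ on any compact of $X$, and finally the factorization $\mathcal A = \hat r(\mathcal A')$ with closed $\mathcal A' \subset B_2^m$ together with Cauchy-type estimates for Banach-valued holomorphic sections gives equivalence of all $\|\cdot\|'_{k,\bar D,E}$-norms on $H_E(\mathcal Z^{p,q}(\bar D,E))$, completing the argument.
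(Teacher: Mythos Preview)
Your overall strategy --- factoring $t|_{\mathcal A}=t'\circ R_{\bar D}|_{\mathcal A}$ and then composing with $P$ to produce a continuous left inverse to $R_{\bar D}|_{\mathcal A}$ --- is exactly the paper's plan. The difficulty is in the step where you try to show that the $C^0$-norm on $R_{\bar D}(\mathcal A)$ controls the Fr\'{e}chet seminorms of $t'(\,\cdot\,)$, and here your argument has a gap.

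You claim that, for $g\in B^*$, the scalar form $\sum_i g(r(b_i))\,\chi_i$ has $C^0$-norm bounded below by a constant times $\max_i |g(r(b_i))|$, ``by linear independence of the $\chi_i$''. But the quantities $g(r(b_i))$ are \emph{functions} on $\bar D$, not constants: the sections $r(b_i)\in\mathcal O(X,E)$ are typically non-constant because the splitting $E_2=E\oplus E_1$ varies over $X$ even though $E_2$ is trivial. Linear independence of the $\chi_i$ in $\mathcal H^{p,q}(\bar D)$ yields a lower bound of the form $\|\sum c_i\chi_i\|_0\gtrsim \max_i|c_i|$ only for \emph{constants} $c_i\in\mathbb C$ (e.g., by integrating against a dual basis); with function coefficients there can be pointwise cancellation where some $\chi_i$ vanish, and the inequality you write is not justified. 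The subsequent appeal to the identity theorem to ``propagate'' a bound from $\bar D$ to arbitrary compacts of $X$ is also only qualitative: uniqueness of analytic continuation gives no norm estimate.

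The paper bypasses this by lifting to the trivial bundle $E_2$ before extracting coefficients. The key point (cf.\ Remark~\ref{rem4.1}) is that the harmonic projector $H_{B_2}$ is continuous already for the $\|\cdot\|_{0,\bar D,E_2}$ norm, since $H_{\mathbb C}$ is given by integration against fixed forms; hence it extends to a continuous operator $\bar H_{B_2}$ on the $C^0$-completion $\bar\Lambda^{p,q}(\bar D,E_2)$. One then defines the $C^0$-continuous map
\[
A(\bar D,E)^m \xrightarrow{\ \tilde t\ } \bar\Lambda^{p,q}(\bar D,E) \xrightarrow{\ \hat i^{p,q}\ } \bar\Lambda^{p,q}(\bar D,E_2) \xrightarrow{\ c\circ\bar H_{B_2}\ } B_2^m \xrightarrow{\ \hat r\ } \mathcal O(X,E)^m,
\]
which on $\mathcal A|_{\bar D}$ recovers the original element of $\mathcal A$. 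At the $E_2$-level the coefficients recovered by $c\circ\bar H_{B_2}$ are genuine constants in $B_2$, so the finite-dimensional linear-independence argument applies, and the final continuous map $\hat r$ carries the $B_2^m$-norm control into control on all compacts of $X$. In short: do not try to bound $r(b_i)$ from $\eta$ directly inside $E$; instead apply $\hat i^{p,q}$ first, recover the constants $b_i\in B_2$ via the $C^0$-bounded operator $\bar H_{B_2}$, and only then map back by $\hat r$.
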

\begin{proof}
Indeed, map $t$ in \eqref{e4.7} can be factorized as $t=\tilde t\circ R_{\bar D}$ for a continuous linear map  
\[
\tilde t: A(\bar{D},E)^m\rightarrow \bar{\Lambda}^{p,q}(\bar{D},E),\quad
\tilde t(g_1,\dots,g_m):=\sum_{i=1}^m g_i\cdot\chi_i,\quad (g_1,\dots, g_m)\in A(\bar D,E)^m,
\]
where $\bar{\Lambda}^{p,q}(\bar{D},E)$ is the completion of the normed space $\bigl(\Lambda^{p,q}(\bar{D},E), \|\cdot\|_{0,\bar{D},E}\bigr)$.

\noindent Also, by our construction, see part {\bf A} above, 
map 
\[
H_{B_2}: \bigl(\Lambda^{p,q}(\bar D, E_2),\|\cdot\|_{0,\bar{D},E_2}\bigr)\rightarrow \bigl(B_2\otimes_{\mathbb C}\mathcal H^{p,q}(\bar D), \|\cdot\|_{0,\bar{D},E_2}\bigr)
\] 
is continuous and, hence, admits a continuous extension
\[
\bar{H}_{B_2}: \bar{\Lambda}^{p,q}(\bar D, E_2)\rightarrow \bigl(B_2\otimes_{\mathbb C}\mathcal H^{p,q}(\bar D), \|\cdot\|_{0,\bar{D},E_2}\bigr);
\]
here $\bar{\Lambda}^{p,q}(\bar D, E_2)$ is the completion of the space $\bigl(\Lambda^{p,q}(\bar D, E_2),\|\cdot\|_{0,\bar{D},E_2}\bigr)$.

Then the composite map  $c\circ \bar{H}_{B_2}\circ\hat i^{p,q}\circ\tilde t: A(\bar{D},E)^m\rightarrow B_2^m$ is continuous with respect to the corresponding norms $\|\cdot\|_{0,\bar{D},E^m}$ and $\|\cdot\|_{0,\bar{D},E_2^m}$ on $A(\bar{D},E)^m$ and $B_2^m$. (Here for a Banach vector bundle $V\rightarrow X$ we set $V^m:=\oplus^m V$.) Note that topologies defined by these norms coincide with topology of uniform convergence for $A(\bar{D},E)^m$ and topology defined by the Banach norm for $B_2^m$. Therefore, if $\{F_k\}_{k\in\mathbb N}\subset\mathcal A|_{\bar D}$ is a Cauchy sequence, then the sequence $\{b_k:=(c\circ H_{B_2}\circ\hat i^{p,q}\circ\tilde t)(F_k)\}_{k\in\mathbb N}$ converges in $B_2^m$, and, hence, $\{(\hat r(b_k)\}_{k\in\mathbb N}$ converges in $\mathcal O(X,E)^m$ (in topology of uniform convergence on compact subsets of $X$). Since $F_k=(R_{\bar D}\circ \hat r)(b_k)$ for all $k$ and $\mathcal A\subset\mathcal O(X,E)^m$ is closed,  $\{F_k\}_{k\in\mathbb N}$ converges in $A(\bar{D},E)^m$ to an element of $\mathcal A|_{\bar D}$, as required. Thus, by the open mapping theorem, $R_{\bar D}:\mathcal A\rightarrow\mathcal A|_{\bar D}$ is an isomorphism of the corresponding Fr\'{e}chet spaces.  
%Finally, by the definition of the space $A(\bar{D},E)$, the map $R_{\bar D}\circ \bar{H}_{B_2}\circ\hat i^{p,q}\circ \tilde t$ is  a continuous projection onto $\mathcal A|_{\bar D}$; hence $\mathcal A|_{\bar D}$ is a complemented subspace of $A(\bar{D},E)^m$.
\end{proof}

Let $D'\supset\bar D$ be a relatively compact subdomain of $X$. We equip the space $C(\bar{D'},E)$ with a norm $\|\cdot\|_{0,\bar{D'},E}$ defined similarly to $\|\cdot\|_{0,\bar{D},E}$ (see subsection~4.1). Topology defined by this norm is topology of uniform convergence on $\bar{D'}$, and $\bigl(C(\bar{D'},E),\|\cdot\|_{0,\bar{D'},E}\bigr)$ is a Banach space. We define $A(\bar{D'},E)$ to be the closure in $C(\bar{D'},E)$ of the trace space
$\mathcal O(X,E)|_{\bar{D'}}$. We have the following sequence of continuous linear maps (induced by subsequent restrictions $X$ to $\bar{D'}$ and $\bar{D'}$ to $\bar{D}$)
\[
\mathcal O(X,E)^m\stackrel{R_{\bar{D'}}}{\longrightarrow}A(\bar{D'},E)^m\stackrel{R_{\bar{D}}^{\bar{D'}}}{\longrightarrow}A(\bar D,E)^m
\]
such that $R_{\bar D}=R_{\bar{D}}^{\bar{D'}}\circ R_{\bar{D'}}$.

As a straightforward corollary of Lemma \ref{lem4.2} we obtain
\begin{lemma}\label{le4.3}
$\mathcal A|_{\bar{D'}}$ is a closed subspace of $A(\bar{D'},E)^m$ and $R_{\bar{D}}^{\bar{D'}}$ maps $\mathcal A|_{\bar{D'}}$ isomorphically onto $\mathcal A|_{\bar{D}}$.
\end{lemma}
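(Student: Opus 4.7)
The plan is to bootstrap Lemma \ref{lem4.2} using the factorization $R_{\bar D}=R_{\bar D}^{\bar{D'}}\circ R_{\bar{D'}}$. Since that lemma already provides a topological isomorphism $R_{\bar D}|_{\mathcal A}:\mathcal A\rightarrow\mathcal A|_{\bar D}$, every element of $\mathcal A|_{\bar{D'}}$ admits a unique lift to $\mathcal A$; I would exploit this uniqueness to transport convergence from the $\bar D$-level back to the $\bar{D'}$-level through the inverse $(R_{\bar D}|_{\mathcal A})^{-1}$.

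For closedness of $\mathcal A|_{\bar{D'}}$ in $A(\bar{D'},E)^m$, I would take a Cauchy sequence $\{F_k\}\subset\mathcal A|_{\bar{D'}}$ in the uniform norm on $\bar{D'}$, write $F_k=\tilde F_k|_{\bar{D'}}$ with uniquely determined $\tilde F_k\in\mathcal A$, and observe that continuity of $R_{\bar{D}}^{\bar{D'}}$ produces a Cauchy sequence $\{\tilde F_k|_{\bar D}\}$ in $A(\bar D,E)^m$ lying in the closed subspace $\mathcal A|_{\bar D}$ (by Lemma \ref{lem4.2}). Hence $\tilde F_k|_{\bar D}\to G$ for some $G\in\mathcal A|_{\bar D}$; pulling back via the Lemma \ref{lem4.2} isomorphism gives $\tilde G:=(R_{\bar D}|_{\mathcal A})^{-1}(G)\in\mathcal A$ together with $\tilde F_k\to\tilde G$ in $\mathcal O(X,E)^m$ (uniform convergence on compacta of $X$, hence in particular on $\bar{D'}$). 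Continuity of $R_{\bar{D'}}$ then forces $F_k=R_{\bar{D'}}(\tilde F_k)\to R_{\bar{D'}}(\tilde G)\in\mathcal A|_{\bar{D'}}$ in $A(\bar{D'},E)^m$, giving closedness.

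For the isomorphism statement, the map $R_{\bar D}^{\bar{D'}}:\mathcal A|_{\bar{D'}}\rightarrow\mathcal A|_{\bar D}$ is continuous, surjective (from $R_{\bar D}=R_{\bar D}^{\bar{D'}}\circ R_{\bar{D'}}$ combined with surjectivity of $R_{\bar D}:\mathcal A\rightarrow\mathcal A|_{\bar D}$), and injective (if $\tilde F\in\mathcal A$ satisfies $\tilde F|_{\bar D}=0$, Lemma \ref{lem4.2} yields $\tilde F=0$, whence $\tilde F|_{\bar{D'}}=0$). Since both source and target are Fr\'{e}chet spaces (Banach, in fact, by the closedness just established), the open mapping theorem promotes this continuous linear bijection to a topological isomorphism. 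I do not expect a substantive obstacle here: the conclusion is essentially a diagram chase on top of Lemma \ref{lem4.2}, and the only subtlety is tracking that each intermediate limit stays inside the relevant closed subspace, which is automatic from the closedness of $\mathcal A\subset\mathcal O(X,E)^m$ furnished by part \textbf{B} of the proof of Theorem \ref{thm2}.
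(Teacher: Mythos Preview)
Your proposal is correct and matches the paper's intent: the paper states Lemma~\ref{le4.3} merely ``as a straightforward corollary of Lemma~\ref{lem4.2}'' without further argument, and your diagram chase through the factorization $R_{\bar D}=R_{\bar D}^{\bar{D'}}\circ R_{\bar{D'}}$ together with the inverse $(R_{\bar D}|_{\mathcal A})^{-1}$ is precisely how that corollary is obtained.
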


In particular, this lemma implies that there exists a constant $C>0$ such that
\begin{equation}\label{e4.8}
\| R_{\bar{D}}^{\bar{D'}}(v)\|_{0,\bar D,E^m}\ge C \|v\|_{0,\bar{D'},E^m}\quad\text{for all}\quad v\in \mathcal A|_{\bar{D'}}.
\end{equation}

Let us fix a complete (smooth) Hermitian metric on $X$ and with its help define the path metric $d: X\times X\rightarrow\mathbb R_+$. For a fixed $\varepsilon>0$ by $S_\varepsilon\subset\bar D$ we denote an $\varepsilon$-net in $\bar D$ with respect to the metric $d$.
\begin{proposition}\label{prop4.4}
For a sufficiently small $\varepsilon$ the restriction map $R_{S_\varepsilon}:A(\bar D,E)\rightarrow C(S_\varepsilon,E)^m$ to $S_\varepsilon$ maps $\mathcal A|_{\bar D}$ isomorphically onto a closed 
%complemented 
subspace of $C(S_\varepsilon, E)^m$.
\end{proposition}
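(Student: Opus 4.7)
The plan is to reduce Proposition \ref{prop4.4} to a lower bound for the restriction map: find $K>0$ so that for all $f\in\mathcal A$,
\[
\|f|_{\bar D}\|_{0,\bar D,E^m}\le K\cdot\|R_{S_\varepsilon}(f)\|_{0,S_\varepsilon,E^m}
\]
provided $\varepsilon$ is sufficiently small. Lemma \ref{lem4.2} identifies $\mathcal A$ with its image $\mathcal A|_{\bar D}$, so the latter is complete in $\|\cdot\|_{0,\bar D,E^m}$. The displayed estimate then makes $R_{S_\varepsilon}$ injective on $\mathcal A|_{\bar D}$ with closed image in $C(S_\varepsilon,E)^m$, and the open mapping theorem gives the claimed isomorphism onto the image.

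To produce the estimate I would combine a uniform Cauchy-type bound with the reverse inequality \eqref{e4.8}. Pick an open $D'\subset X$ with $\bar D\Subset D'\Subset X$. Using the finite cover $(U_j,\varphi_j)$ of a neighbourhood of $\bar{D'}$ and the chosen trivializations $\psi_j$ of $E$, compactness of $\bar D$ provides $\delta>0$ such that every $x\in\bar D$ lies in some $U_j$ and $\varphi_j(U_j\cap D')$ contains the polydisc of radius $\delta$ centred at $\varphi_j(x)$. In these local coordinates any holomorphic section of $E$ becomes a Banach-valued holomorphic function, and the Cauchy integral formula (which goes through unchanged for Banach-valued holomorphic functions) supplies a constant $C_1$, independent of $f\in\mathcal O(X,E)^m$, with
\[
\|f(x)-f(y)\|_{E^m}\le C_1\,d(x,y)\cdot\|f|_{\bar{D'}}\|_{0,\bar{D'},E^m}
\]
for all $x,y\in\bar D$ satisfying $d(x,y)\le\delta/2$.

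Given $x\in\bar D$, choose $s\in S_\varepsilon$ with $d(x,s)<\varepsilon$; assuming $\varepsilon\le\delta/2$ and combining the Cauchy estimate with \eqref{e4.8}, which gives $\|f|_{\bar{D'}}\|_{0,\bar{D'},E^m}\le C^{-1}\|f|_{\bar D}\|_{0,\bar D,E^m}$ for $f\in\mathcal A$, one obtains
\[
\|f(x)\|_{E^m}\le \|R_{S_\varepsilon}(f)\|_{0,S_\varepsilon,E^m}+\frac{C_1\,\varepsilon}{C}\,\|f|_{\bar D}\|_{0,\bar D,E^m}.
\]
Taking the supremum over $x\in\bar D$ and fixing $\varepsilon$ so small that $C_1\varepsilon/C\le 1/2$ absorbs the second term into the left-hand side, yielding the desired inequality with $K=2$. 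The main (routine) point requiring care is making the Cauchy estimate uniform in $x\in\bar D$; this uses only compactness of $\bar D$ and finiteness of the trivializing cover, so only finitely many transition cocycles intervene and the resulting constants stay under control.
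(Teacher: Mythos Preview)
Your proposal is correct and follows essentially the same approach as the paper's proof: both obtain a Lipschitz-type bound $\|f(x)-f(y)\|\le C_1\varepsilon\,\|f\|_{0,\bar{D'},E^m}$ via Cauchy estimates for Banach-valued holomorphic functions (the paper phrases this as a derivative bound plus the mean-value inequality), combine it with the reverse inequality \eqref{e4.8} coming from Lemma~\ref{le4.3}, and choose $\varepsilon$ small enough to absorb the error term. The only cosmetic difference is that the paper normalizes to $\|f|_{\bar D}\|=1$ rather than using your absorption argument, arriving at the identical conclusion $\|R_{S_\varepsilon}(f)\|\ge\tfrac12\|f|_{\bar D}\|$.
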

\begin{proof}
If $v\in A(\bar{D'},E)^m\, (=A(\bar{D'}, E^m)$, then according to the Cauchy estimates for derivatives of bounded holomorphic functions we have for a constant $C'>0$ depending on $D$, $D'$ and definitions of the corresponding norms
\[
\|\partial v\|_{1,\bar{D},E^m}\le C'\|v\|_{0,\bar{D'},E^m}.
\] 
This, the definition of the metric $d$ and the intermediate-value inequality imply that there exist a constant $C''>0$ (independent of $v$) such that for all $\varepsilon>0$ and $x_1,x_2\in\bar{D}$ satisfying $d(x_1,x_2)\le\varepsilon$,
\begin{equation}\label{e4.9}
\max_{i=1,2}\|v(x_1)-v(x_2)\|_{0,\{x_i\},E^m}\le C''\cdot\varepsilon\cdot \|v\|_{0,\bar{D'},E^m}.
\end{equation}
Let us choose $\varepsilon$ so that
\[
0<\varepsilon\le\frac{C}{2C''},
\]
where $C$ is defined in \eqref{e4.8}. If $v\in \mathcal A$ is such that $\|v|_{\bar D}\|_{0,\bar{D},E}=1$, then according to \eqref{e4.8}, $\|v|_{\bar{D'}}\|_{0,\bar{D'},E}\le\frac{1}{C}$, and \eqref{e4.9} implies that
$\|v|_{S_{\varepsilon}}\|_{0,S_{\varepsilon},E^m}\ge\frac 12$. Hence, we have
\begin{equation}\label{e4.10}
\|R_{S_\varepsilon}(v)\|_{0,S_{\varepsilon},E^m}\ge\frac 12\|v\|_{0,\bar D,E^m}\quad\text{for all}\quad v\in\mathcal A|_{\bar{D}}.
\end{equation}
This shows that $R_{S_\varepsilon}$ maps $\mathcal A|_{\bar D}$ isomorphically onto a closed subspace of $C(S_\varepsilon, E)^m$. %Let us prove that this subspace is complemented.
\end{proof}

Taking $S:=S_\varepsilon$ in statement (a) of the theorem with $\varepsilon$ as in Proposition \ref{prop4.4} we obtain the required result; this
completes the proof of part (1) of the theorem.

\medskip

(2) Suppose $E=f^*E'$, where $f:X\rightarrow Y$ is a holomorphic map into a Stein manifold $Y$ and $E'$ is a holomorphic Banach vector bundle on $Y$ with fibre $B$ such that the group $GL(B)$ is contractible. The latter implies that $E'$ is isomorphic to the trivial bundle $Y \times B$ in the category of topological Banach vector bundles. In turn, since $Y$ is Stein, the Oka principle for holomorphic Banach vector bundles, see \cite{Bun}, implies that $E'$ is holomorphically isomorphic to $Y \times B$ as well, and so $E$ is holomorphically isomorphic to $X \times B$. Thus, the required result follows from part (1) of the theorem applied to the trivial bundle $X \times B$.

The proof of the theorem is complete.
%=========================
\section{Proof of Theorem \ref{thm6}}

The isomorphism in (1) is 
induced by the map associating to each $\xi \in \mathcal Z_0^{n-p,n-q}(\bar{D},E^*)$ a linear functional
$$
\mathcal Z^{p,q}(\bar{D},E) \ni \theta \mapsto J_E(\theta,\xi),
$$
where 
$$
J_E:\Lambda^{p,q}(\bar{D},E) \times \Lambda^{n-p,n-q}(\bar{D},E^*) \rightarrow \mathbb C
$$ 
is a continuous bilinear form with respect to the product topology $\tau_{p,q}(E)\times\tau_{n-p,n-q}(E^*)$ on $\Lambda^{p,q}(\bar{D},E) \times \Lambda^{n-p,n-q}(\bar{D},E^*)$, see subsection~4.1, defined as follows.% (We define $J_E$ for an arbitrary $E$, albeit we will use it for trivial bundles only.)

Let  $V\rightarrow X$  be a holomorphic Banach vector bundle.
Consider a continuous bundle homomorphism $$\tr_E(V):  E\otimes E^*\otimes V \rightarrow V$$ sending a vector $e_x\otimes e_x^*\otimes v_x$ in the fibre of $E\otimes E^*\otimes V$ over $x\in X$ to the vector $e_x^*(e_x) \cdot v_x$ in the fibre of $V$ over $x$ (here $e_x$, $e_x^*$ and $v_x$ are vectors in fibres of $E$, $E^*$ and $V$ over $x$) and then extended by linearity.

To define $J_E$ we take
\[
V_{p,q,r,s}:=\biggl(\bigl(\wedge^p T^*\bigr)\wedge  (\wedge^q \overline{T}^*) \biggr) \otimes \biggl(\bigl(\wedge^r T^* \bigr)\wedge (\wedge^s \overline{T}^*)\biggr).
\]
By definition, forms in $\Lambda^{p,q}(\bar D, E)$ are $C^\infty$ sections over $\bar D$ of bundle $E\otimes\bigl(\wedge^p T^*\bigr)\wedge  (\wedge^q \overline{T}^*)$, forms in $\Lambda^{r,s}(\bar D, E^*)$ are $C^\infty$ sections over $\bar D$ of bundle $E^* \otimes\bigl(\wedge^r T^* \bigr)\wedge (\wedge^s \overline{T}^*)$.
Therefore, if
$\theta \in  \Lambda^{p,q}(\bar D, E)$ and $\xi \in \Lambda^{r,s}(\bar D, E^*)$, then $\theta \otimes \xi$ is a $C^\infty$ section over $\bar D$ of  bundle 
\begin{equation*}
\biggl( E\otimes\bigl(\wedge^p T^*\bigr) \wedge(\wedge^q \overline{T}^*) \biggr)\otimes \biggl( E^*\otimes\bigl(\wedge^r T^* \bigr)\wedge  (\wedge^s \overline{T})^*\biggr) \cong E\otimes E^*\otimes V.
\end{equation*}
In turn, $\tr_{E}(V_{p,q,r,s})(\theta \otimes \xi)$ is a $C^\infty$ section over $\bar D$ of bundle $V_{p,q,r,s}$. Let 
$\Lambda_{p,q,r,s}$ be the canonical quotient homomorphism of bundles $V \rightarrow \bigl(\wedge^{p+r} T^*\bigr)  \wedge \bigl(\wedge^{q+s} \overline{T}^*\bigr)$ (obtaining by replacing $\otimes$ by $\wedge$ in the definition of $V$). Assuming that $r=n-p$, $s=n-q$, we set
\begin{equation}\label{je}
J_E(\theta,\xi):=\int_{D}\Lambda_{p,q,n-p,n-q}\biggl(\tr_{E}(V_{p,q,n-p,n-q})(\theta \otimes \xi)\biggr)
\end{equation}
(by definition, the integrand is in $\Lambda^{n,n}(\bar{D})$, and so the integral is well defined). The construction of $J_E$ and the definition of
norms $\|\cdot\|_{0,\bar{D},E}$ and  $\|\cdot\|_{0,\bar{D},E^*}$ given in subsection~4.1 imply immediately
\begin{lemma}
\label{bddlem}
There is a constant $C>0$ such that 
\[
|J_E(\theta,\xi)| \leqslant C\cdot\|\theta\|_{0,\bar{D},E}\cdot \|\xi\|_{0,\bar{D},E^*}\quad\text{for all}\quad \theta \in \Lambda^{p,q}(\bar D,E),\ \xi \in \Lambda^{n-p,n-q}(\bar{D},E^*).
\]
In particular, $J_E:\bigl(\Lambda^{p,q}(\bar{D},E) \times \Lambda^{n-p,n-q}(\bar{D},E^*), \tau_{p,q}(E)\times\tau_{n-p,n-q}(E^*)\bigr) \rightarrow \mathbb C$ is a continuous bilinear form.
\end{lemma}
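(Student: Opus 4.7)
The plan is to bound the integrand of $J_E$ pointwise on each trivializing chart and then integrate over the bounded domain $D$; the continuity statement is then automatic. I would fix the finite coordinate cover $(U_j,\varphi_j)$ of a neighbourhood of $\bar D$ and the bundle trivializations $\psi_j\colon E|_{\bar U_j}\to \bar U_j\times B$ used in subsection~4.1 to define $\|\cdot\|_{0,\bar D,E}$, together with the dual trivializations $\psi_j^*\colon E^*|_{\bar U_j}\to \bar U_j\times B^*$ giving $\|\cdot\|_{0,\bar D,E^*}$. By construction, in these trivializations the bundle homomorphism $\tr_E$ becomes fibrewise the canonical evaluation pairing $(e,e^*)\mapsto e^*(e)$ between $B$ and $B^*$.

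In the holomorphic coordinates pulled back from $\varphi_j$, write
\[
\theta|_{U_j}=\sum_{|\alpha|=p,|\beta|=q}f^j_{\alpha\beta}\,dz_\alpha\wedge d\bar z_\beta,\qquad \xi|_{U_j}=\sum_{|\alpha'|=n-p,|\beta'|=n-q}g^j_{\alpha'\beta'}\,dz_{\alpha'}\wedge d\bar z_{\beta'},
\]
with $B$-valued coefficients $f^j_{\alpha\beta}$ and $B^*$-valued coefficients $g^j_{\alpha'\beta'}$. The definitions of $\tr_E$ and of the wedge quotient $\Lambda_{p,q,n-p,n-q}$ then show that the integrand of $J_E(\theta,\xi)$ restricted to $U_j$ equals the scalar $(n,n)$-form $\sum g^j_{\alpha'\beta'}(f^j_{\alpha\beta})\,dz_\alpha\wedge d\bar z_\beta\wedge dz_{\alpha'}\wedge d\bar z_{\beta'}$. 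Only those finitely many pairs with complementary multi-indices survive, and each surviving coefficient is bounded pointwise by $\|f^j_{\alpha\beta}\|_B\cdot\|g^j_{\alpha'\beta'}\|_{B^*}$ via the elementary inequality $|e^*(e)|\le\|e\|_B\|e^*\|_{B^*}$.

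Now I would pick a partition of unity $\{\rho_j\}$ subordinate to $(U_j)$ with $\sum_j\rho_j\equiv 1$ on a neighbourhood of $\bar D$, decompose $J_E(\theta,\xi)=\sum_j\int_{U_j\cap D}\rho_j\cdot(\text{integrand})$, and bound each piece by $\mathrm{vol}(U_j\cap D)$ times the $C^0$-norm of its coefficients. Summing over $j$ and collapsing the resulting sums into the defining norms $\|\theta\|_{0,\bar D,E}$ and $\|\xi\|_{0,\bar D,E^*}$ produces a constant $C$ depending only on $D$, $n$, $p$, $q$, the chosen cover, partition of unity and Hermitian volume form, for which the desired bound holds. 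Continuity of $J_E$ in the product topology is then immediate, since $\|\cdot\|_{0,\bar D,E}$ and $\|\cdot\|_{0,\bar D,E^*}$ are among the seminorms generating $\tau_{p,q}(E)$ and $\tau_{n-p,n-q}(E^*)$, respectively.

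No genuine obstacle is expected: the whole statement is a $C^0$-level estimate whose analytic content is the duality inequality $|e^*(e)|\le\|e\|_B\|e^*\|_{B^*}$. The only point requiring attention is that $\psi_j^*$ be the trivialization dual to $\psi_j$ so that the fibrewise evaluation realized by $\tr_E$ matches the $B$--$B^*$ pairing used to define $\|\cdot\|_{0,\bar D,E^*}$; but this compatibility is part of the construction of the dual bundle.
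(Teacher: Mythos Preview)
Your proposal is correct and is precisely the unfolding of what the paper asserts: the paper states only that the estimate ``follows immediately'' from the construction of $J_E$ and the definition of the $\|\cdot\|_{0,\bar D,E}$, $\|\cdot\|_{0,\bar D,E^*}$ norms in subsection~4.1, giving no further details. Your local-coordinate computation, the duality inequality $|e^*(e)|\le\|e\|_B\|e^*\|_{B^*}$, and the partition-of-unity integration are exactly what that remark is pointing to.
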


We are in position to prove the theorem.

\smallskip

\textbf{I.}~First, we prove the result for the case of the trivial bundle $E=X \times B$, where $B$ is a complex Banach space. 
Let us prove (1). 
\begin{lemma}\label{stokes}
$J_E=0$ on $\bigl(\mathcal Z^{p,q}(\bar{D},E)\times  \mathcal B_0^{n-p,n-q}(\bar{D},E^*)\bigr)\bigcup\bigl(\mathcal B^{p,q}(\bar{D},E)\times \mathcal Z_0^{n-p,n-q}(\bar{D},E^*)\bigr)$.
\end{lemma}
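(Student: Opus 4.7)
\smallskip

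\textbf{Proof plan.} The strategy is a Stokes theorem argument applied to the scalar pairing $\langle\cdot,\cdot\rangle_E$ obtained by tracing the $E\otimes E^*$ factor and wedging the form parts. Concretely, for $\alpha\in\Lambda^{a,b}(\bar D,E)$ and $\beta\in\Lambda^{c,d}(\bar D,E^*)$ set
\[
\langle\alpha,\beta\rangle_E:=\Lambda_{a,b,c,d}\bigl(\tr_E(V_{a,b,c,d})(\alpha\otimes\beta)\bigr)\in\Lambda^{a+c,b+d}(\bar D),
\]
so that $J_E(\theta,\xi)=\int_D\langle\theta,\xi\rangle_E$ whenever $(a+c,b+d)=(n,n)$. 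The desired conclusion will follow once we have established a Leibniz rule for this pairing and can discard the resulting boundary integral.

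First I would prove the local identity
\[
d\langle\alpha,\beta\rangle_E=\langle\bar\partial\alpha,\beta\rangle_E+(-1)^{a+b}\langle\alpha,\bar\partial\beta\rangle_E
\]
for the trivial bundle $E=X\times B$, in the bidegree range where $\langle\alpha,\beta\rangle_E$ has holomorphic degree $n$ (so that $\partial$ of this form vanishes for bidegree reasons and $d$ reduces to $\bar\partial$). Since $E$ is trivial one may write $\alpha$ (resp.\ $\beta$) locally as a finite sum $\sum f_I\,\omega_I$ with $f_I\colon U\to B$ smooth and $\omega_I$ a scalar form (resp.\ $\sum g_J\,\sigma_J$ with $g_J\colon U\to B^*$). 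The pairing then reduces to $\sum g_J(f_I)\,\omega_I\wedge\sigma_J$, and the Leibniz rule follows from the ordinary Leibniz rule for scalar smooth forms together with the chain rule identity $d\bigl(g(f)\bigr)=(dg)(f)+g(df)$ for a smooth $B$-valued $f$ and $B^*$-valued $g$.

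Next I would apply Stokes' theorem: for any $\alpha,\beta$ with the right bidegrees,
\[
\int_D d\langle\alpha,\beta\rangle_E=\int_{\partial D}\langle\alpha,\beta\rangle_E,
\]
and this boundary integral vanishes whenever either $\alpha$ or $\beta$ restricts to zero on $\partial D$. In the first case of the lemma, write $\xi=\bar\partial\beta$ with $\beta\in\Lambda^{n-p,n-q-1}_0(\bar D,E^*)$, apply the Leibniz rule to $\langle\theta,\beta\rangle_E\in\Lambda^{n,n-1}(\bar D)$, use $\bar\partial\theta=0$ and $\beta|_{\partial D}=0$ to get $J_E(\theta,\xi)=\pm\int_{\partial D}\langle\theta,\beta\rangle_E=0$. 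In the second case, write $\theta=\bar\partial\alpha$ with $\alpha\in\Lambda^{p,q-1}(\bar D,E)$, apply the rule to $\langle\alpha,\xi\rangle_E\in\Lambda^{n,n-1}(\bar D)$, and use $\bar\partial\xi=0$ together with $\xi|_{\partial D}=0$.

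The main technical obstacle is the first step: formulating and proving the Leibniz rule rigorously for Banach-valued forms under the trace pairing. The bookkeeping of signs (one must track both the form bidegrees and the placement of the $E,E^*$ factors inside the tensor product $V_{a,b,c,d}$ before and after applying $\tr_E$ and $\Lambda_{a,b,c,d}$) is the only delicate point, but since $E$ is trivial one may carry it out coordinatewise after choosing a local frame. Once this identity is in hand, both cases of the lemma reduce to a single application of Stokes together with the stated vanishing of either $\bar\partial\theta$, $\bar\partial\xi$, or a boundary trace.
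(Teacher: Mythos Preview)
Your proposal is correct and follows essentially the same approach as the paper: the paper likewise reduces to a Leibniz-type identity for the pairing (stated with $\bar\partial$ on the left, which agrees with your $d$ in bidegree $(n,n-1)$) verified in local coordinates, followed by Stokes' theorem, citing \cite[Ch.V.1]{FK} for the scalar case. Your write-up simply makes the local-coordinate verification of the Leibniz rule more explicit than the paper's one-line remark.
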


\begin{proof}
For $E=X\times\mathbb C$ (the scalar case) the required result is proved in \cite[Ch.V.1]{FK}. The proof in the general case repeats word-for-word the previous one and is based on the following identities, the first one valid for all $\phi \in \Lambda^{p,q}(\bar{D},E), \psi \in \Lambda^{r,s}(\bar{D},E^*)$, and the second one for all
$\phi \in \Lambda^{p,q}(\bar{D},E), \psi \in \Lambda^{n-p,s}(\bar{D},E^*)$  with $q+s=n-1$:
$$
\begin{array}{l}
\displaystyle
\bar{\partial} \Lambda_{p,q,r,s}\biggl(\tr_{E}(V_{p,q,r,s})(\phi \otimes \psi)\biggr)=
\Lambda_{p,q+1,r,s,}\biggl(\tr_{E}(V_{p,q+1,r,s})(\bar{\partial}\phi \otimes \psi)\biggr)\medskip\\
\displaystyle
+(-1)^{p+q}\Lambda_{p,q,r,s+1}\biggl(\tr_{E}(V_{p,q,r,s+1})(\phi \otimes \bar{\partial}\psi)\biggr),
\end{array}
$$
$$
\int_{\bar{D}} \bar{\partial} \Lambda_{p,q,n-p,s}\biggl(\tr_{E}(V_{p,q,n-p,s})(\phi \otimes \psi)\biggr)
=\int_{\partial D}
 \Lambda_{p,q,n-p,s}\biggl(\tr_{E}(V_{p,q,n-p,s})(\phi \otimes \psi)\biggr).
$$
(The first identity is easily verified in local coordinates. The second one is the Stokes theorem.)
\end{proof}

Lemmas \ref{stokes} and \ref{bddlem} and the fact that $\mathcal B^{p,q}(\bar{D},E)\subset \mathcal Z^{p,q}(\bar{D},E)$ is a closed subspace imply that $J_E$ descends to a bilinear form 
\begin{equation}
\label{L2}
\mathcal J_E: H^{p,q}(\bar{D},E) \times H_0^{n-p,n-q}(\bar{D},E^*) \rightarrow \mathbb C
\end{equation}
such that $S_E(h_0):=\mathcal J_E(\cdot,h_0)\in (H^{p,q}(\bar{D},E))^*$ for each $h_0\in H_0^{n-p,n-q}(\bar{D},E^*)$.

Let us prove that the linear map 
\begin{equation}
\label{cohmap}
S_E:H_0^{n-p,n-q}(\bar{D},E^*) \rightarrow (H^{p,q}(\bar{D},E))^*
\end{equation}
is injective and surjective. Along the lines of the proof, we will show that $\mathcal B_0^{n-p,n-q}(\bar{D},E^*)$ is a closed subspace of $\mathcal Z_0^{n-p,n-q}(\bar{D},E^*)$, which will prove assertion (1) in this case.

\smallskip

Thus, we must prove:

\smallskip

a) ({\em surjectivity}) given an element $F \in (H^{p,q}(\bar{D},E))^*$, there exists $\xi \in \mathcal Z_0^{n-p,n-q}(\bar{D},E^*)$ such that $J_E(\theta,\xi)=F([\theta])$ for all $\theta \in \mathcal Z^{p,q}(\bar{D},E)$; here $[\theta]\in H^{p,q}(\bar{D},E)$ denotes the cohomology class of $\theta$.

\smallskip

b) ({\em injectivity}) if $J_E(\theta,\xi)=0$ for all $\theta \in \mathcal Z^{p,q}(\bar{D},E)$, then $\xi \in \mathcal B_0^{n-p,n-q}(\bar{D},E^*)$.

\smallskip

First, let us prove a). Recall that we  fix forms
$\{\gamma_i\}_{i=1}^m \subset \mathcal Z_0^{n-p,n-q}(\bar{D})$ such that $\int_D \chi_i \wedge \gamma_j=\delta_{ij}$ - the Kronecker delta; here
$\{\chi_i\}_{i=1}^m$ is the basis of $\mathcal H^{p,q}(\bar{D})$.

Due to \eqref{id_id0} of subsection~4.2, each form $\theta \in \mathcal Z^{p,q}(\bar{D},E)$ can be uniquely presented as $\theta=\bar{\partial} G_B(\theta)+H_B(\theta)$, where $H_B(\theta)=\sum_{i=1}^m b_i \cdot \chi_i$ and all $b_i \in B$. Therefore the correspondence $[\theta]\mapsto (b_i)_{i=1}^m$ determines an isomorphism of Fr\'{e}chet spaces $H^{p,q}(\bar{D},E)\cong B^m$. Under this isomorphism,
$(H^{p,q}(\bar{D},E))^* \cong (B^*)^m$ and so each $F\in H^{p,q}(\bar{D},E))^*$ has a form $F=(b_i^*)_{i=1}^m\in (B^*)^m$ for some $b_i^*\in B^*$, and
$F([\theta]):=\sum_{i=1}^m b_i^*(b_i)$.

Now, we set $\xi:=\sum_{i=1}^m b_i^*\cdot\gamma_i \in \mathcal Z_0^{n-p,n-q}(\bar{D},E^*)$. Then, by the definition of $J_E$ we have
\[
J_E(\theta,\xi)=J_E\left(\sum_{i=1}^m b_i\cdot\chi_i,\sum_{i=1}^m b_i^* \cdot \gamma_i\right)=\sum_{i,j=1}^mb_i^*(b_j)\int_D \chi_i \wedge \gamma_j=
\sum_{i=1}^m b_i^*(b_i)=F([\theta]),
\]
as required.\smallskip

Next, let us prove b). We construct a continuous linear operator 
\begin{equation}\label{qe}
Q_{E}:\Lambda^{n-p,n-q}(\bar{D},E^*) \rightarrow \Lambda_0^{n-p,n-q-1}(\bar{D},E^*)
\end{equation}
 such that if $\xi \in \mathcal Z_0^{n-p,n-q}(\bar{D},E^*)$ and $J_E(\cdot,\xi)=0$, then %$Q_{E}\,\xi \in \Lambda_0^{n-p,n-q-1}(\bar{D},E^*)$ and 
$\xi=\bar{\partial}(Q_{E}\, \xi)$. 
Clearly, existence of such an operator would imply b) and, hence, show that $\mathcal B_0^{n-p,n-q}(\bar{D},E^*)$ is a closed subspace of $\mathcal Z_0^{n-p,n-q}(\bar{D},E^*)$.

In case $E=X \times \mathbb C$ the required operator was constructed in \cite[Ch.V.1]{FK}: $$Q_{X \times \mathbb C}\,\psi:=\rho(-\bar{\partial}\alpha+\theta); 
$$
here $\rho$ is the defining function of $D$ and $\alpha \in \Lambda^{n-p,n-q-2}(\bar{D},E^*),\, \theta \in \Lambda^{n-p,n-q-1}(\bar{D},E^*)$
are uniquely determined by the formula
$$\ast\overline{\bar{\partial} N(\ast \bar{\psi})} =: \bar{\partial}\rho \wedge \alpha +\rho \theta~(=\bar{\partial}(\rho\alpha)+\rho(-\bar{\partial}(\rho\alpha)+\theta)).$$ 
Here $\ast$ is the Hodge star operator and $N$ is the ``$\bar{\partial}$-Neumann operator'' in the terminology of \cite[Ch.V.1]{FK}; the continuity of $Q_{X \times \mathbb C}$ in the Fr\'{e}chet topology on $\Lambda^{n-p,n-q}(\bar{D})$ follows from \cite[Th.~3.1.14]{FK} and the Sobolev embedding theorem. 

In the general case, we define $Q_{E}$ using $Q_{X \times \mathbb C}$ similarly to how it was done for operators $G_B$, $H_B$ in part {\bf A} of the proof of Theorem \ref{thm2}, cf. subsection~4.2: first, we define $Q_{E}:=\Id_{B^*} \otimes Q_{X \times \mathbb C}$ on the tensor product $B^* \otimes \Lambda^{n-p,n-q}(\bar{D})$. Then, using the facts that $B^* \otimes \Lambda^{n-p,n-q}(\bar{D})$ is dense in $\Lambda^{n-p,n-q}(\bar{D},E^*)$ and  that in virtue of continuity of $Q_{X \times \mathbb C}$ operator $Q_{E}$ is bounded with respect to Fr\'{e}chet seminorms $\|\cdot\|'_{k,\bar{D},E^*}$ on $\Lambda^{n-p,n-q}(\bar{D},E^*)$, we extend $Q_{E}$ by continuity to $\Lambda^{n-p,n-q}(\bar{D},E^*)$.
Now, we prove that the constructed operator $Q_{E}$ possesses the required properties. 

Indeed, by definition, inclusion $Q_{E}\xi \in \Lambda_0^{n-p,n-q-1}(\bar{D},E^*)$ is equivalent to identity $(Q_{E}\xi)|_{\partial D}=0$. It is verified by applying to $Q_{E}\xi$ ``scalarization operators'' $\hat{g}_{n-p,n-q}:\Lambda^{n-p,n-q}(\bar{D},E^*) \rightarrow \Lambda^{n-p,n-q}(\bar{D})$ (cf.~\eqref{g_op} with $g$ viewed as an element of $B^{**}$), and using that $\hat{g}_{n-p,n-q}(Q_{E}\xi)=
Q_{X\times\mathbb C}\, \hat{g}_{n-p,n-q}(\xi)$ and the latter vanishes on $\partial D$. Identity $\xi=\bar{\partial}(Q_{E} \xi)$ for
$\xi$ satisfying $J_E(\cdot,\xi)=0$
is also verified by this method. This completes the proof of b).

\smallskip

To finish the proof of assertion (1) it remains to show that $S_E$ and its inverse  are continuous (see \eqref{cohmap}). Indeed, continuity of $S_E$ follows from Lemma \ref{bddlem} and the fact that $\mathcal B_0^{n-p,n-q}(\bar{D},E^*)$ is a closed subspace of $\mathcal Z_0^{n-p,n-q}(\bar{D},E^*)$. Continuity of the map inverse to $S_E$ follows from the open mapping theorem for linear continuous maps between Fr\'{e}chet spaces.

\smallskip

Let us prove part (2) of the theorem in the case of trivial bundles. According to part {\bf A} of the proof of Theorem \ref{thm2}, $\mathcal A=B^m$, i.e., consists of constant sections in
$\mathcal O(X,E)^m$. Hence, $\mathcal B:=(B^*)^m$ consists of constant sections in $\mathcal O(X,E^*)^m$. As the required set $S^*$ we take a point in $\bar D$; then the statement $\mathcal B\cong\mathcal B|_{S^*}$ is obvious.
The fact that the linear map $M:\mathcal B \rightarrow H_0^{n-p,n-q}(\bar{D},E^*)$,
$$
M(h_1,\dots,h_m):=\left[\sum_{i=1}^m h_i|_{\bar D} \cdot \gamma_i\right], \quad (h_1,\dots,h_m) \in \mathcal B,
$$
is an isomorphism of Fr\'{e}chet spaces (i.e., statement (2)\,(b)) follows from the arguments presented in the proof of a) and b) above.

This completes the proof of the theorem for trivial bundles.

\medskip

\textbf{II.~}Now we consider the case of an arbitrary holomorphic Banach vector bundle $E \in \Sigma_0(X)$. 
Recall that by the definition of class $\Sigma_0(X)$ there exists a holomorphic Banach vector bundle $E_1$ on $X$ such that
the Whitney sum $E_2:=E \oplus E_1$ is holomorphically trivial, i.e. $E_2= X \times B_2$ for a complex Banach space $B_2$. 
We have the corresponding embedding and quotient homomorphisms of bundles
$$
i:E \rightarrow E_2 \quad\text{and}\quad r:E_2 \rightarrow E\quad \text{such that}\quad r\circ i={\rm Id}_{E}.
$$
In turn, $E_2^*=E^* \oplus E_1^*$ and we have the adjoint homomorphisms
$$
i^*:E_2^* \rightarrow E^*\quad\text{and}\quad r^*:E^* \rightarrow E_2^*\quad\text{such that}\quad i^*\circ r^*={\rm Id}_{E^*}.
$$
The above homomorphisms induce continuous linear maps between the corresponding Fr\'{e}chet spaces of forms
$$\hat{i}^{s,t}:\Lambda^{s,t}(\bar{D},E) \rightarrow \Lambda^{s,t}(\bar{D},E_2),\qquad
(\hat{i^*})^{s,t}:\Lambda^{s,t}(\bar{D},E_2^*) \rightarrow \Lambda^{s,t}(\bar{D},E^*),$$
$$\hat{r}^{s,t}:\Lambda^{s,t}(\bar{D},E_2) \rightarrow \Lambda^{s,t}(\bar{D},E),\qquad
(\hat{r^*})^{s,t}:\Lambda^{s,t}(\bar{D},E^*) \rightarrow \Lambda^{s,t}(\bar{D},E_2^*).$$
Also, these maps act between the corresponding spaces $\Lambda_0^{s,t}$ of forms vanishing on $\partial D$.

\smallskip

First, we prove assertion (1) of the theorem.
To prove that $\mathcal B_0^{n-p,n-q}(\bar{D},E^*)$ is closed in $\mathcal Z_0^{n-p,n-q}(\bar{D},E^*)$, it suffices to construct a continuous linear map
$$
Q_{E}:\mathcal B_0^{n-p,n-q}(\bar{D},E^*) \rightarrow \Lambda_0^{n-p,n-q-1}(\bar{D},E^*), 
$$
such that
\begin{equation}
\label{idQ}
\bar{\partial}Q_{E}=\Id_{\mathcal B_0^{n-p,n-q}(\bar{D},E^*)}. 
\end{equation}
We define
$$Q_{E}:=(\hat{i}^*)^{n-p,n-q-1} \circ Q_{E_2} \circ (\hat{r}^*)^{n-p,n-q},$$
where continuous map $Q_{E_2}:\Lambda^{n-p,n-q}(\bar{D},E_2^*) \rightarrow \Lambda_0^{n-p,n-q-1}(\bar{D},E_2^*)$ for the trivial bundle $E_2$ was constructed in part I of the proof, see \eqref{qe}.
Then, since operator $\bar{\partial}$ commutes with maps $(\hat{i}^*)^{n-p,n-q-1}$ and $(\hat{r}^*)^{n-p,n-q}$, property \eqref{idQ} follows from the analogous one for $Q_{E_2}$ (see above) and in view of the identity  $(\hat{i}^*)^{n-p,n-q-1} \circ (\hat{r}^*)^{n-p,n-q}=\Id_{\Lambda^{n-p,n-q}(\bar{D},E^*)}$. Hence, the  quotient space $H_0^{n-p,n-q}(\bar{D},E^*)$ is Fr\'{e}chet.

\smallskip

Further, identity $r\circ i={\rm Id}_E$ and the definition of (continuous) bilinear form $J_E$ clearly imply for all $\theta \in \mathcal Z^{p,q}(\bar{D},E)$,  $\xi \in \mathcal Z_0^{n-p,n-q}(\bar{D},E^*)$,
\begin{equation}
\label{J_gen}
J_E(\theta,\xi)=J_{E_2}\bigl(\hat{i}^{p,q}(\theta),(\hat{r}^*)^{n-p,n-q}(\xi)\bigr).
\end{equation}
In particular, by Lemma \ref{stokes} for $J_{E_2}$ and the fact that $\bar{\partial}$ commutes with $(\hat{i}^*)^{n-p,n-q-1}$ and $(\hat{r}^*)^{n-p,n-q}$ we have
$J_E(\theta,\xi)=0$ if $\theta \in \mathcal B^{p,q}(\bar{D},E)$ or $\xi \in \mathcal B_0^{n-p,n-q}(\bar{D},E^*)$.
Therefore, $J_E$ descends to a continuous bilinear form 
$$
\mathcal J_E: H^{p,q}(\bar{D},E) \times H_0^{n-p,n-q}(\bar{D},E^*) \rightarrow \mathbb C.
$$
As before, $\mathcal J_E$ determines a continuous linear map $S_E:H_0^{n-p,n-q}(\bar{D},E^*) \rightarrow (H^{p,q}(\bar{D},E))^*$,
\begin{equation}
\label{cohmap_gen}
S_E(h_0):=\mathcal J_E(\cdot, h_0),\qquad h_0\in H_0^{n-p,n-q}(\bar{D},E^*).
\end{equation}
Note that since maps $\hat{i}^{s,t}$, $(\hat{i}^*)^{s,t}$ and $\hat{r}^{s,t}$, $(\hat{r}^*)^{s,t}$ commute with operator $\bar{\partial}$, they descend to maps between the corresponding cohomology groups (denoted  by $\bar{i}^{s,t}$, $(\bar{i}^*)^{s,t}$ and $\bar{r}^{s,t}$, $(\bar{r}^*)^{s,t}$, respectively, and similarly but with the lower index $_0$ in case of maps between $H_0$ cohomology groups).

It follows from \eqref{J_gen}, \eqref{cohmap_gen} and \eqref{cohmap} that
$$
S_E=(\bar{i}^{p,q})^* \circ S_{E_2} \circ (\bar{r}^*)_0^{n-p,n-q}.
$$

Now, consider the second summand in the decomposition $E_2=E\oplus E_1$. Then we have the corresponding embedding and quotient homomorphisms of bundles
$$
i_1:E_1 \rightarrow E_2 \quad\text{and}\quad r_1:E_2 \rightarrow E_1\quad \text{such that}\quad r_1\circ i_1={\rm Id}_{E_1}.
$$
Repeating the above arguments with $(E,i,r)$ replaced by $(E_1,i_1,r_1)$ we arrive to a similar identity for continuous linear maps between the corresponding cohomology groups
$$
S_{E_1}=(\bar{i}_1^{p,q})^* \circ S_{E_2} \circ (\bar{r}_1^*)_0^{n-p,n-q}.
$$
Note that the map
\begin{equation}
\label{cohmap2}
\begin{array}{l}
\bigl((\bar{i}^{p,q})^*,(\bar{i}_1^{p,q})^* \bigr) \circ S_{E_2} \circ \Sigma\bigl((\bar{r}^*)_0^{n-p,n-q},(\bar{r}_1^*)_0^{n-p,n-q} \bigr): \\
\\
H_0^{n-p,n-q}(\bar{D},E^*) \oplus H_0^{n-p,n-q}(\bar{D},E_1^*) \rightarrow (H^{p,q}(\bar{D},E))^* \oplus (H^{p,q}(\bar{D},E_1))^*,
\end{array}
\end{equation}
where $\Sigma(u,v)=u+v$ ($u,v \in H_0^{n-p,n-q}(\bar{D},E_2^*)$), is an isomorphism.\\ Indeed, by the result of part I map $S_{E_2}:H_0^{n-p,n-q}(\bar{D},E_2^*) \rightarrow (H^{p,q}(\bar{D},E_2))^*$ is an isomorphism. Also, decomposition
$E \oplus E_1=E_2$ implies that maps $\bigl((\bar{i}^{p,q})^*,(\bar{i}_1^{p,q})^* \bigr)$ and $\Sigma\bigl((\bar{r}^*)_0^{n-p,n-q},(\bar{r}_1^*)_0^{n-p,n-q} \bigr)$ are isomorphisms between the corresponding spaces.

Next, by the definition of bilinear form $J_{E_2}$ we have 
\[
(\bar{i}_1^{p,q})^* \circ S_{E_2} \circ (\bar{r}^*)_0^{n-p,n-q}=
(\bar{i}^{p,q})^* \circ S_{E_2} \circ (\bar{r}_1^*)_0^{n-p,n-q}=0.
\]
Therefore, isomorphism \eqref{cohmap2} coincides with $S_{E} \oplus S_{E_1}$. This implies, in particular, that $S_E$ is an isomorphism and completes the proof of part (1) of the theorem.

 \medskip

Let us prove (2)(b). Let $M:=M_{E_2}$ be the map of part (2)\,(b) of the theorem for the trivial bundle $E_2$. We set
$$N_{E_2}:=M_{E_2}^{-1}: H_0^{n-p,n-q}(\bar{D},E_2^*) \rightarrow (B_2^*)^m$$ 
and define a continuous linear map
$$
N_{E}: H_0^{n-p,n-q}(\bar{D},E^*) \rightarrow \mathcal O(X,E^*)^m
$$
by the formula
$$
N_{E}:=\hat{i}^* \circ N_{E_2} \circ (\bar{r}^*)_0^{n-p,n-q},
$$
where $\hat{i}^*:=\oplus^m \bigl((\hat{i}^*)^{0,0}|_{B^*_2} \bigr)$ (here $B^*_2$ is identified with the space of constant sections in $\mathcal O(X,E^*_2)$). Since $N_{E_2}$ is continuous, map $N_{E}$ is continuous as well.

Let us show that $N_{E}$ is injective. We argue as above.
Namely, map 
\begin{equation}
\label{mapinj}
(\hat{i}^*, \hat{i}_1^*) \circ N_{E_2} \circ \Sigma\bigl((\bar{r}^*)_0^{n-p,n-q},(\bar{r}^*)_0^{n-p,n-q}\bigr):
\end{equation} 
$$
H_0^{n-p,n-q}(\bar{D},E^*) \oplus H_0^{n-p,n-q}(\bar{D},E_1^*) \rightarrow \mathcal O(X,E^*)^m \oplus \mathcal O(X,E_1^*)^m
$$
is injective. Indeed,
$N_{E_2}$ is an isomorphism by the corresponding result of part I, and the injectivity of maps
$(\hat{i}^*, \hat{i}_1^*)$, $\Sigma\bigl((\bar{r}^*)_0^{n-p,n-q},(\bar{r}^*)_0^{n-p,n-q}\bigr)$ follow from the decomposition $E \oplus E_1=E_2$.
Since 
\[
\hat{i}_1^* \circ N_{E_2} \circ (\bar{r}^*)_0^{n-p,n-q}=\hat{i}^* \circ N_{E_2} \circ (\bar{r}_1^*)_0^{n-p,n-q}=0
\]
(because $r_1\circ i=r\circ i_1=0$),
injective map \eqref{mapinj} coincides with $N_E \oplus N_{E_1}$, and so map $N_{E}$ must be injective as well.

Now, we define $$\mathcal B:=N_{E} \bigl( H_0^{n-p,n-q}(\bar{D},E^*)\bigr) \subset \mathcal O(X,E^*)^m.$$ 
(Space $\mathcal B \subset \mathcal O(X,E^*)^m$ is endowed with the Fr\'{e}chet topology of uniform convergence on compact subsets of $X$.)
Let us show that $\mathcal B$ is a closed subspace of $\mathcal O(X,E^*)^m$. To this end, we define a continuous linear map
$$
M=M_{E}:=(\bar{i}^*)_0^{n-p,n-q} \circ \widetilde M_{E_2} \circ \hat{r}^*:\mathcal B \rightarrow H_0^{n-p,n-q}(\bar{D},E^*), \quad \hat{r}^*=\oplus^m (\hat{r}^*)^{0,0},
$$
where 
\[
\widetilde M_{E_2}(h_1,\dots,h_m):=\left[\sum_{i=1}^m h_i|_{\bar D} \cdot \gamma_i\right], \quad (h_1,\dots,h_m) \in \mathcal O(X,E_2^*)^m.
\]
By definition, $M_{E_2}=\widetilde M_{E_2}|_{(B_2^*)^m}$. Also, one can easily check that 
\[
(\bar{i}^*)_0^{n-p,n-q} \circ \widetilde M_{E_2} \circ \hat{r}^*\circ\hat{i}^*|_{(B_2^*)^m}=(\bar{i}^*)_0^{n-p,n-q} \circ M_{E_2}.
\]
From here, using that $M_{E_2} \circ N_{E_2}=\Id_{H_0^{n-p,n-q}(\bar{D},E_2^*)}$, we obtain 
$M_{E} \circ N_{E}=\Id_{H_0^{n-p,n-q}(\bar{D},E^*)}$. Since $M_E$ is continuous, the latter identity implies that space $\mathcal B$ is complete and hence  is closed in $\mathcal O(X,E^*)^m$.

The fact that $\mathcal B$ is isomorphic to the dual of $\mathcal A$ is now immediate, since by what we have proved above $\mathcal B \cong (H^{p,q}(\bar{D},E))^*$, while by Theorem \ref{thm2}(1), $\mathcal A \cong H^{p,q}(\bar{D},E)$. The proof of assertion (2)(b) is complete.

The proof of assertion (2)(a) is analogous to the proof of part (1)(a) of Theorem \ref{thm2}.

The proof of the theorem is complete.

\end{document}